\numberwithin{equation}{section}
\newenvironment{problab}[1]
{\noindent\textbf{#1}.}
{\vskip 6pt}
\newtheorem{theorem}[equation]{Theorem}
\newtheorem{lemma}[equation]{Lemma}
\theoremstyle{remark}
\newtheorem{remark}[equation]{Remark}
\newcommand{\M}{\mathfrak M}
\newcommand{\F}{\mathbb F}
\newcommand{\Q}{\mathbb Q}
\newcommand{\Or}{\mathcal{O}}
\newcommand{\Z}{\mathbb Z}
\newcommand{\p}{\mathfrak{p}}
\newcommand{\sidesim}{\begin{sideways}%
      $\sim$\end{sideways}}
\DeclareMathOperator{\subexp}{subexp}
\DeclareMathOperator{\Id}{Id}
\DeclareMathOperator{\nrd}{nrd}
\DeclareMathOperator{\rad}{rad}
\DeclareMathOperator{\Sym}{sym}
\DeclareMathOperator{\charpoly}{charpoly}
\DeclareMathOperator{\minpoly}{minpoly}
\def\id{\operatorname{Id}}
\def\id{\operatorname{id}}
\def\det{\operatorname{det}}
\def\exp{\operatorname{exp}}
\def\M{\operatorname{M}}
\def\rank{\operatorname{rank}}
\def\det{\operatorname{det}}
\def\GL{\operatorname{GL}}
\def\M{\operatorname{M}}
\begin{document}

\title{Metacommutation of primes in central simple algebras}
\author{Sara Chari}

\maketitle

\begin{abstract}

In a quaternion order of class number one, an element can be factored in multiple ways depending on the order of the factorization of its reduced norm. The fact that multiplication is not commutative causes an element to induce a permutation on the set of primes of a given reduced norm. We discuss this permutation and previously known results about the cycle structure, sign, and number of fixed points for quaternion orders. We generalize these results to other orders in central simple algebras over global fields.

\end{abstract}


\section{Introduction}

Let $B=(-1,-1 \mid \Q)$ be the Hamilton quaternion algebra over $\Q$ and let $\Or= \Z+ \Z i + \Z j + \Z \left[ \frac{-1+i+j+ij}{2}\right] \subseteq B$ be the Hurwitz order. Given an element $\alpha \in \Or$ with reduced norm $\nrd(\alpha) =a\not=0$ and a factorization $a=p_1 p_2\cdots p_r$ (where the $p_i$ are prime, not necessarily distinct), there exists
a factorization $\alpha=\pi_1 \pi_2 \cdots \pi_r$, where $\pi_i \in \Or$ and $\nrd(\pi_i)=p_i$ for $i=1, \dots, r$. If $\alpha$ is primitive (i.e., $\alpha$ is not divisible in $\Or$ by a positive integer $m \geq 2$), then this factorization of $\alpha$ is unique up to \emph{unit migration}: the only other such factorizations are of the form $$\alpha=(\pi_1\varepsilon_1)(\varepsilon_1^{-1}\pi_2\varepsilon_2)\cdots (\varepsilon_{r-1}^{-1}\pi_r),$$ where each $\varepsilon_i \in \Or^\times$. For further reading, see Conway--Smith \cite[Chapter 5]{ConwaySmith} and Voight \cite[Chapter 11]{Voight}.

Because $\Or$ is not commutative, the factorization of $\alpha$ depends on the order of the primes $p_1, \dots, p_r$ in the factorization of $a$. It is therefore of interest to study how switching the order of the prime factors of $a$ affects the factorization of $\alpha$. For simplicity, consider the case where $a=pq$, where $p$ and $q$ are distinct primes. We may then factor $\alpha=\pi \omega$, where $\nrd(\pi)=p$ and $\nrd(\omega)=q$ as above. However, if we factor $a=qp$, then we obtain a different factorization $\alpha=\omega'\pi'$, where $\nrd(\omega')=q$ and $\nrd(\pi')=p$. These factorizations are unique (up to unit migration) as described by Conway--Smith \cite[Chapter 5]{ConwaySmith}, so in particular, $\pi'$ is unique up to left multiplication by units in $\Or$. In this way, $\omega$ induces a permutation $\sigma_\omega$ on the set of elements $\{\pi_i\}_i$ of reduced norm $p$ up to left multiplication by units as follows: if $\pi_i\omega=\omega'\pi_j$ for some $\omega' \in \Or$, then we define $\sigma_\omega(\pi_i)\colonequals\pi_j$. 

Cohn--Kumar \cite{CohnKumar} studied the permutation $\sigma_\omega$. They computed the number of fixed points and the sign of $\sigma_\omega$.
%
%
Their result was reproven by Forsyth--Gurev--Shrima \cite{FGS}, who showed that the permutation $\sigma_\omega$ can be understood through an action of $\GL_2(\F_p)$ on $\mathbb{P}^1(\F_p)$ via a correspondence of Hurwitz primes with points on a conic. They also determined the cycle structure of $\sigma_\omega$ by doing so for the corresponding permutation $\sigma_Q$.

In this paper, we extend the results of Forsyth--Gurev--Shrima to a more general setting. Let $R$ be a Dedekind domain with field of fractions $K$ and let $\p \subseteq R$ be a prime ideal. Let $\Or$ be an $R$-order in a central simple algebra $B$ over $K$ and define $\Id(\Or;\p)$ to be the set of left ideals $I \subseteq \Or$ of reduced norm $\p$. Let $K_{(\p)}$ be the localization at $\p$ and $K_\p$ the completion at $\p$. Let $\Or_{(\p)} \subseteq B$ denote the localization of $\Or$ at $\p$ and $\Or_{\p} \subseteq B_\p$ the completion of $\Or$ at $\p$. In Section 2, we define in an analogous way a permutation $\sigma_{\omega}$ of the set $\Id(\Or;\p)$, induced by an element $\omega$ in $\Or_{(\p)}^\times \cap \Or$ in the case where $\Or_\p$ is maximal in $B_\p$. We then show in Section 3 that by reduction of $\Or_{\p}$ modulo its Jacobson radical $J_{\p}$ and by identifying elements of $\Id(\Or;\p)$ with the kernels of the corresponding elements of $\Id(\Or_\p; \p)$ modulo $J_{\p}$, we get a group action of $(\Or_{\p}/J_{\p})^\times \simeq \GL_m(\F_q)$ on $\mathbb{P}^{m-1}(\F_q)$ for some $m \in \Z_{>0}$ and $\F_q$ a finite field extension of $R/\p$, by $Q \cdot v = Q^{-1}v$. For each $Q \in \GL_m(\F_q)$, this action gives a permutation $\tau_Q$ of $\mathbb{P}^{m-1}(\F_q)$ by $\tau_Q(v)=Q \cdot v$. Let $\rho \colon \Or_{\p}^\times \rightarrow \GL_m(\F_q)$ be the reduction map modulo $J$, and let $\sigma \colon \Or_{\p}^\times  \rightarrow \Sym(\Id(\Or_\p;\p))$ and $\tau \colon \GL_m(\F_q) \rightarrow \Sym(\mathbb{P}^{m-1}(\F_q))$ be the maps that send $\omega$ to $\sigma_\omega$ and $Q$ to $\tau_Q$, respectively. In section 4, we prove our main result.

\begin{theorem} Given $\omega \in \Or_{\p}^\times$ and $Q = \rho(\omega)$, then the following diagram commutes.

$$\xymatrix{
{\Or_{\p}^\times}\ar^{\sigma\hspace{.2in}}[r]\ar_{\rho}@{^{}->}[d]&{\Sym(\Id(\Or_\p;\p))}\ar^{\sidesim}[d]\\
{\GL_m(\F_q)}\ar^{\tau\hspace{.2in}}[r]&{\Sym(\mathbb{P}^{m-1}(\F_q))}
}$$

\end{theorem}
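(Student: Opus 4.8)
The plan is to trace a single left ideal $I \in \Id(\Or_\p;\p)$ through the square and check that the two composites land on the same point of $\mathbb{P}^{m-1}(\F_q)$. The first step is an ideal-theoretic reformulation of $\sigma_\omega$. Since $\omega \in \Or_\p^\times$, right multiplication by $\omega$ is an isomorphism of left $\Or_\p$-modules $\Or_\p \xrightarrow{\sim} \Or_\p$ taking the left ideal $I$ to $I\omega$; thus $I\omega \subseteq \Or_\p$ is again a left ideal, $\Or_\p/I\omega \cong \Or_\p/I$ as left modules, and hence $\nrd(I\omega) = \nrd(I) = \p$, i.e.\ $I\omega \in \Id(\Or_\p;\p)$. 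Unwinding the definition of $\sigma_\omega$ from Section 2 — translating a metacommutation identity $\pi\omega = \omega'\pi'$ into $(\Or_\p\pi)\omega = \Or_\p\omega'\pi' = \Or_\p\pi'$ and using $\omega' \in \Or_\p^\times$ — gives $\sigma_\omega(I) = I\omega$.

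Next I would reduce modulo the Jacobson radical. Let $x \mapsto \ol x$ denote the reduction map $\Or_\p \to \Or_\p/J_\p \cong \M_m(\F_q)$, and for a left ideal $I$ write $\ol I$ for its image, a left ideal of $\M_m(\F_q)$. By the analysis of Section 3, $\ol I = \{A \in \M_m(\F_q) : W_I \subseteq \ker A\}$, where $W_I \in \mathbb{P}^{m-1}(\F_q)$ is the common kernel of $\ol I$, and $I \mapsto W_I$ is the bijection $\Id(\Or_\p;\p) \xrightarrow{\sim} \mathbb{P}^{m-1}(\F_q)$ inducing the right-hand vertical map of the diagram. Because $\omega$ is a unit, $J_\p\omega = J_\p$, so $I\omega + J_\p = (I+J_\p)\omega + J_\p$; applying the (ring) reduction map gives $\ol{I\omega} = \ol I\cdot Q$ with $Q = \rho(\omega) = \ol\omega$.

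It then remains to compute the common kernel of $\ol I\cdot Q = \{AQ : A \in \ol I\}$. For $A \in \M_m(\F_q)$ one has $\ker(AQ) = Q^{-1}(\ker A)$ since $Q$ is invertible, so intersecting over $A \in \ol I$ shows that the common kernel of $\ol I\cdot Q$ is $Q^{-1}W_I$. Hence $W_{\sigma_\omega(I)} = W_{I\omega} = Q^{-1}W_I = Q\cdot W_I = \tau_Q(W_I)$, and since $I$ was arbitrary the right-hand vertical isomorphism carries $\sigma_\omega$ to $\tau_Q = \tau_{\rho(\omega)}$ — precisely the commutativity of the square.

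I expect the only genuine subtlety to be keeping the conventions aligned: $\sigma_\omega$ is \emph{right} translation by $\omega$, which reduces to right translation by $Q$, whose effect on common kernels is left multiplication by $Q^{-1}$ — and this matches exactly the normalization $\tau_Q(v) = Q\cdot v = Q^{-1}v$ of the $\GL_m(\F_q)$-action fixed in Section 3 (with the naive action $v\mapsto Qv$ the square would commute only after inverting). The remaining inputs — that $I\omega$ stays inside $\Or_\p$ with reduced norm $\p$, and that reduction modulo $J_\p$ is multiplicative on the ideals in question — are routine and are supplied by Sections 2 and 3.
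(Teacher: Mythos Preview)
Your argument is correct and follows essentially the same route as the paper. The paper factors the verification through an intermediate set $M_m^{(m-1)}$ of rank-$(m-1)$ matrices up to left units (Theorems~\ref{thm2} and~\ref{thm3}), first matching $\sigma_\omega$ with $A\mapsto AQ$ on generators and then matching that with $v\mapsto Q^{-1}v$ on kernels; you collapse these two steps by working directly with the reduced ideal $\ol I$ and its common kernel $W_I$, which is the same data since $\ol I=\M_m(\F_q)A$ forces $W_I=\ker A$. One small remark: your appeal to Section~2 for $\sigma_\omega(I)=I\omega$ is fine because $\Or_\p$ is a PIR, but the identity also drops out of the Section~3 definition $\sigma_\omega(P)=P\omega+\Or_\p\p$ directly, via the observation you already make that $J_\p\omega=J_\p$ (so $\Or_\p\p\subseteq J_\p\subseteq I\omega$).
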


This theorem provides a way to understand the permutation $\omega \in \Or_{(\p)}^\times \cap \Or$ through the action of $\GL_m(\F_q)$ on $\mathbb{P}^{m-1}(\F_q)$ in the general setting of central simple algebras. Under certain conditions, the cycle structure of $\sigma_\omega$ is easily determined in higher dimensions, and in these cases, we compute the cycle structure in Section 5. This answers Conway and Smith's original question in \cite[Chapter 5.5]{ConwaySmith} about how the factorization $\pi \omega$ relates to the factorization $\omega'\pi'$ via the cycle structure of $\sigma_\omega$ and rephrases the question in the general context of central simple algebras. While the exact structure of the permutation $\sigma_\omega$ is not as easily seen from the trace and norm of $\omega$, we are still able to view the permutation as one given by an action of matrices on projective space. Results of Fripertinger \cite{HF} can then be used to compute the cycle structure of the permutation given by the matrix action for larger matrices.

Since our permutations arise from a lack of commutativity, the size of the cycles of $\sigma_\omega$ can be interpreted as a way to determine how close $\omega$ is to the center of the completion $\Or_\p$; i.e. if all left ideals are principal, then starting with $\pi \in \Or$, the size of a cycle containing an element $\pi$ up to left multiplication by units is the number of times we must apply metacommutation by $\omega$ in order to obtain $\pi$ again.

We discuss the factorization as it applies to matrix algebras in Example \ref{ex1}. This study of factorization in noncommutative rings has been studied in several different contexts, including \cite{WR} where Rump describes the set of Hurwitz primes (elements of prime reduced norm) as an $L^*$-algebra and shows that metacommutation can occur in any $L^*$-algebra. He also describes the relationship of metacommutation with certain Garside groups. 



We thank John Voight and Daniel Smertnig for their helpful comments and feedback.

\section{Metacommutation in central simple algebras}


In this section, we set up notation and define the permutation $\sigma_\omega$. Let $R$ be a Dedekind domain whose field of fractions is a global field $K$. Let $B$ be a finite-dimensional central simple algebra of dimension $n^2$ over $K$ and let $\Or \subseteq B$ be an $R$-order. For a prime ideal $\p \subseteq R$, define $R_{(\p)}$ and $R_\p$ to be the localization and completion at $\p$, respectively. Define $\Or_{(\p)} \colonequals \Or \otimes_R R_{(\p)}$ and $\Or_\p \colonequals \Or \otimes_R R_\p$ to be the localization and completion of $\Or$ at $\p$. We will choose $\p \subseteq R$ such that $\Or_{\p}$ is maximal in $B_\p$.


To motivate a more general construction, we will first consider the case where all left $\Or$-ideals are principal and we call $\Or$ a left principal ideal ring (PIR). To define the permutation $\sigma_\omega$ in this case, we use the following theorem on factorization in $\Or$. 

\begin{theorem}\label{thm0} Let $\alpha \in \Or$ and write $\nrd(\alpha)=a_1a_2$ with $Ra_1+Ra_2=R$. Then, we can factor $\alpha=\omega_1\omega_2$ with $\omega_1, \omega_2 \in \Or$, where $\nrd(\Or\omega_1)=Ra_1$ and $\nrd(\Or\omega_2)=Ra_2$. Moreover, if $a_2 \in R$ is prime, then $\omega_2$ is unique up to left multiplication by elements in $\Or^\times$. 
\end{theorem}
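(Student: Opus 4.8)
The plan is to work locally at each prime of $R$, since both the factorization of $\alpha$ and the reduced norm condition $Ra_1 + Ra_2 = R$ can be checked primewise. First I would observe that by the theory of ideals in orders over Dedekind domains (and using that $\Or$ is a left PIR here), the two-sided ideal $\nrd(\Or\alpha) = Ra$ factors as $Ra = (Ra_1)(Ra_2)$ with the two factors coprime, so there is a unique left ideal $I \subseteq \Or$ with $\nrd(I) = Ra_2$ and $\Or\alpha \subseteq I$; indeed, $I$ is obtained by taking $\Or\alpha$ and ``removing the $a_1$-part,'' which makes sense locally because at each prime $\p$ dividing $a_1$ we have $I_\p = \Or_\p$, while at primes $\p$ dividing $a_2$ we have $I_\p = (\Or\alpha)_\p$ up to the coprimality splitting, and at all other primes $I_\p = \Or_\p$. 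Since $\Or$ is a left PIR, $I = \Or\omega_2$ for some $\omega_2 \in \Or$, and then $\alpha = \omega_1\omega_2$ for a unique $\omega_1 \in \Or$ (unique because $\omega_2$ is a non-zero-divisor in $B$, as $\nrd(\omega_2)\neq 0$). One checks $\nrd(\Or\omega_1) = \nrd(\Or\alpha)\nrd(\Or\omega_2)^{-1} = Ra_1$ using multiplicativity of the reduced norm on ideals.

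The key input is therefore the existence and uniqueness of the intermediate left ideal $I$ with $\Or\alpha \subseteq I \subseteq \Or$ and $\nrd(I) = Ra_2$. For existence, I would define $I$ by its localizations: set $I_\p = \Or_\p$ for $\p \nmid a_2$ and $I_\p = $ the unique left $\Or_\p$-ideal containing $(\Or\alpha)_\p$ with reduced norm $\p^{v_\p(a_2)}$ for $\p \mid a_2$; this is well-defined by local-global compatibility for lattices, and its reduced norm is $Ra_2$ by construction. The point that this local ideal at $\p \mid a_2$ exists and contains $(\Or\alpha)_\p$ uses that $\Or_\p$ is maximal, hence hereditary, so left ideals of a given reduced norm sit in a controlled chain; since $v_\p(a_1) = 0$ at such $\p$, the localization $(\Or\alpha)_\p$ already has reduced norm exactly $\p^{v_\p(a_2)}$, so we may simply take $I_\p = (\Or\alpha)_\p$, and there is nothing to do. (Thus $I$ is just the lattice agreeing with $\Or\alpha$ at primes dividing $a_2$ and with $\Or$ elsewhere.)

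For the moreover clause, suppose $a_2 = \p$ is prime. If $\Or\omega_2 = \Or\omega_2'$ with both of reduced norm $\p$, then $\omega_2' = u\omega_2$ for some $u$ with $\Or u = \Or$, i.e.\ $u \in \Or^\times$; conversely left-multiplying $\omega_2$ by a unit does not change the left ideal. So the claimed uniqueness up to $\Or^\times$ is exactly the statement that $\omega_2$ is determined by $\Or\omega_2$, which is the ideal $I$ constructed above --- and $I$ is unique because at $\p$ it must equal $(\Or\alpha)_\p$ (as $v_\p(a_1)=0$) and at all other primes it must equal $\Or_\p$ (as $v_\p(a_2)=0$), leaving no freedom. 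I expect the main obstacle to be bookkeeping rather than conceptual: one must be careful that ``reduced norm of a left ideal'' behaves multiplicatively and compatibly with localization for orders that are maximal only after localizing at $\p$ (the global order $\Or$ need not be maximal), so I would either restrict attention to the relevant primes or invoke the standard dictionary (e.g.\ Voight \cite[Chapter 18]{Voight}) relating left ideals, their reduced norms, and local data. The uniqueness of the full factorization $\alpha = \omega_1\omega_2$ beyond the prime case would of course fail, which is precisely why only $\omega_2$ (equivalently, only the last factor, corresponding to the prime $a_2$) is pinned down up to units.
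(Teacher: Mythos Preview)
Your argument is correct and ultimately produces the same intermediate ideal as the paper, but the two presentations differ in emphasis. The paper simply sets $I=\Or\alpha+\Or a_2$ and asserts that $\nrd(I)=Ra_2$; your local--global description of $I$ (agreeing with $\Or\alpha$ at primes dividing $a_2$ and with $\Or$ elsewhere) is the very same lattice, since at any $\p\mid a_2$ one has $a_2\in\Or_\p\alpha$ (because $\nrd(\alpha)\in\Or_\p\alpha$ via the reduced characteristic polynomial and $a_1$ is a $\p$-unit). The paper's global formula has the virtue of being a one-line definition that never invokes the local--global dictionary; your formulation has the virtue that the claim $\nrd(I)=Ra_2$ is immediate from the construction rather than something one must separately verify.

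For the uniqueness clause the two routes are genuinely different. The paper argues globally by writing $\Or\omega_2+\Or\omega_2'=\Or\pi$ and using irreducibility of $a_2$ to force $\beta,\beta'\in\Or^\times$ in $\omega_2=\beta\pi$, $\omega_2'=\beta'\pi$. You instead pin down $\Or\omega_2$ prime by prime, which is cleaner and more transparent. The one step you should make explicit is the implication ``$(\Or\omega_2)_\p\supseteq(\Or\alpha)_\p$ with equal reduced norm forces equality'' for left ideals in a maximal local order; this is standard (the index is determined by the reduced norm), but it is doing real work in your phrase ``leaving no freedom,'' and naming it would tighten the write-up.
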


\begin{proof}

Consider the left ideal $I=\Or \alpha+ \Or a_2$. Then, $I=\Or \omega_2$ for some $\omega_2 \in \Or$. Note also that $\alpha \in I$, so $\alpha=\omega_1\omega_2$ for some $\omega_1 \in \Or$. One can verify that $\nrd(I)=Ra_2$. 
%
%
%

To conclude, we show that if $a_2=\nrd(\omega_2)$ is prime, then $\omega_2$ is unique up to left multiplication by units. This is because if we have $\alpha=\omega_1\omega_2=\omega_1'\omega_2'$, then $\Or \omega_2 \subseteq \Or \omega_2 + \Or \omega_2'=\Or \pi$ for some $\pi \in \Or$ since $\Or$ is a PIR.  It follows that $\omega_2=\beta \pi$ for some $\beta \in \Or$ and similarly, $\omega_2'=\beta'\pi$ for some $\beta' \in \Or$. We must have either $\beta \in \Or^\times$ or $\pi \in \Or^\times$ since $a_2=\nrd(\omega_2)=\nrd(\beta)\nrd(\pi)$ is prime and hence irreducible. But, $a_2 \mid \nrd(\omega_2)$ and $a_2 \mid \nrd(\omega')$, so $a_2 \mid\nrd(\pi)$. Therefore, $\nrd(\pi) \notin R^\times$, so $\nrd(\beta) \in R^\times$, so $\beta\in \Or^\times$ and similarly, $\beta' \in \Or^\times$. Finally, $\omega_2=\beta^{-1}\beta' \omega_2'$, so $\omega_2$ is unique up to left multiplication by units in $\Or$. \qedhere 
\end{proof}

\begin{lemma} \label{lem2}

Let $\omega \in \Or_{(\p)}^\times \cap \Or$ be an element of reduced norm $a$. Then for each element $\pi \in \Or$ of prime reduced norm $\nu$, we obtain a new element of reduced norm $u\nu$ for $u \in R^\times$, which is unique up to left multiplication by units in $\Or^\times$. 

\end{lemma}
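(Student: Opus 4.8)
The plan is to reduce Lemma~\ref{lem2} to the factorization result of Theorem~\ref{thm0}, working over the localization $\Or_{(\p)}$ rather than $\Or$ itself, and then to descend back to $\Or$. First I would write $\nrd(\omega) = a$ and observe that, since $\omega \in \Or_{(\p)}^\times$, the ideal $\nrd(\Or_{(\p)}\omega)$ is the unit ideal in $R_{(\p)}$, so $a$ is a unit at $\p$, i.e. $v_\p(a) = 0$; all the "interesting" prime factors of $a$ lie away from $\p$. Now given $\pi \in \Or$ with $\nrd(\pi) = \nu$, a prime of $R$ lying over $\p$ (so that $\pi$ matters for the permutation), I would consider the product $\pi\omega \in \Or_{(\p)}$, with reduced norm $\nu a$. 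Since $\gcd(\nu, a) = R$ at $\p$ (indeed $a$ is a $\p$-unit and $\nu$ is a power of $\p$ up to units), Theorem~\ref{thm0} applies over the PIR $\Or_{(\p)}$: we may factor $\pi\omega = \omega' \pi'$ with $\nrd(\Or_{(\p)}\pi') = R_{(\p)}\nu$ and $\nrd(\Or_{(\p)}\omega') = R_{(\p)}a$, and since $\nu$ is prime, $\pi'$ is unique up to left multiplication by $\Or_{(\p)}^\times$.

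The next step is to replace $\pi'$ by a genuine element of $\Or$ of reduced norm $u\nu$ with $u \in R^\times$. Here I would use the fact that $\Or_{(\p)}$ and $\Or$ differ only by inverting primes away from $\p$: given $\pi' \in \Or_{(\p)}$, there is a unit $\varepsilon \in \Or_{(\p)}^\times$ such that $\varepsilon\pi' \in \Or$, and one can arrange this so that $\nrd(\varepsilon\pi')$ generates the same ideal $\p^{v_\p(\nu)} = R_{(\p)}\nu \cap R$ locally, hence equals $u\nu$ for some $u \in R^\times$ after possibly adjusting. Concretely, $\Or\pi := \Or_{(\p)}\pi' \cap \Or$ is a left $\Or$-ideal whose localization at $\p$ is $\Or_{(\p)}\pi'$ and whose localization at every other prime is all of $\Or_{(\q)}$; since we are in the PIR setting this ideal is principal, say $\Or\pi = \Or\pi_0$ with $\pi_0 \in \Or$, and $\nrd(\Or\pi_0) = \p^{v_\p(\nu)}$ as ideals of $R$, so $\nrd(\pi_0) = u\nu$ for a unit $u$. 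This $\pi_0$ is the claimed new element.

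For uniqueness up to left multiplication by $\Or^\times$: if $\pi_0, \pi_1 \in \Or$ both arise this way, then $\Or_{(\p)}\pi_0 = \Or_{(\p)}\pi'= \Or_{(\p)}\pi_1$ by the uniqueness clause of Theorem~\ref{thm0}, and at every prime $\q \neq \p$ both localizations are the full order; hence $\Or\pi_0 = \Or\pi_1$ as left ideals of $\Or$, which forces $\pi_0 = \eta\pi_1$ for some $\eta \in \Or^\times$.

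I expect the main obstacle to be the bookkeeping in the descent step: ensuring that one can pass from the local element $\pi'\in\Or_{(\p)}$ to an element of $\Or$ while keeping the reduced norm equal to $\nu$ up to a \emph{global} unit $u \in R^\times$ (not merely up to a $\p$-unit), and confirming that the ideal $\Or_{(\p)}\pi' \cap \Or$ really is trivial at all other primes — this is where the hypothesis that $\omega \in \Or_{(\p)}^\times$ (so that it contributes nothing away from $\p$) is doing the work, and it should be stated carefully. The rest is a routine transcription of Theorem~\ref{thm0} to the localized order.
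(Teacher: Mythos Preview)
Your approach is correct but takes an unnecessarily roundabout path compared to the paper. The key point you seem to have missed is that in Section~2 the standing hypothesis is that $\Or$ itself is a left PIR (see the sentence immediately preceding Theorem~\ref{thm0}). Given this, the paper's proof is a single line: set $\alpha=\pi\omega$, note $\nrd(\alpha)=a\nu$ with $Ra+R\nu=R$ (since $\omega\in\Or_{(\p)}^\times$ forces $a\notin\p=R\nu$), and apply Theorem~\ref{thm0} directly to $\Or$ to get $\alpha=\omega'\pi'$ with $\pi'$ unique up to $\Or^\times$. No localization, no descent.

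Your route---localize to $\Or_{(\p)}$, apply Theorem~\ref{thm0} there, then descend via $I=\Or_{(\p)}\pi'\cap\Or$---does work, and the bookkeeping you flag in your ``obstacle'' paragraph can indeed be carried out (the ideal $I$ is trivial at every $\q\ne\p$ because $\nu\in R\smallsetminus\q$ and $\Or_{(\p)}\nu\subseteq\Or_{(\p)}\pi'$, so $\nu\Or\subseteq I$). What your approach would buy is a version of the lemma that only needs $\Or_{(\p)}$ to be a PIR rather than $\Or$ itself; but that is precisely the generalization the paper handles in Section~3 by passing to ideals and dropping elements altogether. So here the extra machinery is redundant: just invoke Theorem~\ref{thm0} in $\Or$.
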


\begin{proof}
Let $\alpha=\pi \omega$ and factor $\nrd(\alpha)=\nrd(\pi \omega) =a\nu$. Then, by Theorem \ref{thm0}, we can factor $\alpha=\omega'\pi'$ where $\omega'$ and $\pi'$ have the desired reduced norms and $\pi'$ is unique up to left multiplication by units in $\Or^\times$. 
\end{proof}

Let $\Id(\Or;R\nu)$ be the set of elements in $\Or$ of reduced norm $\nu$, up to left multiplication by units. We define a map $\sigma_\omega \colon \Id(\Or; R\nu) \rightarrow \Id(\Or; R \nu)$ by $$\pi \mapsto \pi'$$ if $$\pi \omega=\omega'\pi'$$ for some $\omega' \in \Or$.



\section{The Permutation}
We now consider the more general case where we no longer require the left ideals of $\Or$ to be principal. Let $P \subseteq \Or$ be a left ideal of reduced norm $\nrd(P)=\p$ and let $\omega \in \Or_{(\p)}^\times \cap \Or$. 
We define a new left ideal of $\Or$ depending on $P$ and $\omega$, by
\begin{equation} \label{eq1}
P'\colonequals P \omega+\Or \p.
\end{equation}
\begin{lemma} The set $P'$ in \eqref{eq1} is a left $\Or$-ideal with $\nrd(P')=\p$.
\end{lemma}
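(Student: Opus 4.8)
The plan is to verify the two claims about $P' = P\omega + \Or\p$ separately: first that it is a left $\Or$-ideal, and then that its reduced norm is $\p$. For the first claim, note that $P\omega$ is a left $\Or$-submodule of $B$ (since $P$ is a left ideal and right multiplication by $\omega$ is $R$-linear and commutes with left multiplication), and $\Or\p$ is visibly a left ideal, so $P'$ is a left $\Or$-submodule of $B$. To see it is a genuine left ideal (i.e., a full $R$-lattice contained in $\Or$, or at least with the right norm properties), I would first check $P' \subseteq \Or$: since $\omega \in \Or$ we have $P\omega \subseteq \Or\Or = \Or$, and $\Or\p \subseteq \Or$, so indeed $P' \subseteq \Or$. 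That it spans $B$ over $K$ is clear since $P$ already does and $\omega \in \Or_{(\p)}^\times$ is invertible in $B$.

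The key step is the reduced norm computation, and the natural tool is localization, since reduced norm of an ideal is determined locally: $\nrd(P') = \prod_{\Fr{q}} (\nrd(P')R_{(\Fr{q})})$, so it suffices to compute $\nrd(P'_{(\Fr{q})})$ at each prime $\Fr{q}$. At primes $\Fr{q} \neq \p$, the element $\omega$ is a unit in $\Or_{(\Fr{q})}$ — wait, this requires care: we only assumed $\omega \in \Or_{(\p)}^\times$, not that $\omega$ is a unit away from $\p$. However, $\omega \in \Or$ has some reduced norm $a = \nrd(\omega)$, and at primes $\Fr{q}$ not dividing $a$, $\omega$ is a unit in $\Or_{(\Fr{q})}$, so $P'_{(\Fr{q})} = P_{(\Fr{q})}\omega + \Or_{(\Fr{q})}\p = P_{(\Fr{q})} + \Or_{(\Fr{q})}\p$; if moreover $\Fr{q} \neq \p$ then $\Or_{(\Fr{q})}\p = \Or_{(\Fr{q})}$ and this is all of $\Or_{(\Fr{q})}$, matching $\nrd(P)_{(\Fr{q})}$ since $\nrd(P) = \p$ is a unit there too. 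The remaining case is $\Fr{q} = \p$: here $\Or_\p$ is maximal, $\omega \in \Or_{(\p)}^\times$ so $P_{(\p)}\omega = P_{(\p)}$ as left $\Or_{(\p)}$-modules, whence $P'_{(\p)} = P_{(\p)} + \Or_{(\p)}\p = P_{(\p)}$ because $\nrd(P) = \p$ means $P_{(\p)} \supseteq \Or_{(\p)}\p$ (the ideal $P_{(\p)}$ has reduced norm $\p$ in the maximal order, so it contains $\p\Or_{(\p)}$). Therefore $\nrd(P'_{(\p)}) = \nrd(P_{(\p)}) = \p R_{(\p)}$, and assembling over all primes gives $\nrd(P') = \p$.

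The main obstacle I anticipate is bookkeeping the behavior at primes dividing $a = \nrd(\omega)$ other than $\p$: there one must argue that $\Or_{(\Fr{q})}\p = \Or_{(\Fr{q})}$ (true since $\p \not\subseteq \Fr{q}$) forces $P'_{(\Fr{q})} = \Or_{(\Fr{q})}$ regardless of what $P\omega$ contributes, so the local norm is trivial and matches. This is the one place where the term $\Or\p$ in the definition \eqref{eq1} is doing essential work — it "clears" $P'$ to be unit at all primes away from $\p$ even after twisting by $\omega$. Once the local picture is in hand the global statement follows immediately from the local-global characterization of reduced norms of lattices in the maximal order, so I would organize the proof as: (1) $P' \subseteq \Or$ is a left ideal, (2) localize, (3) handle $\Fr{q} \neq \p$, (4) handle $\Fr{q} = \p$ using maximality of $\Or_\p$ and invertibility of $\omega$ there, (5) conclude.
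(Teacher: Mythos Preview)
Your localization approach is sound and genuinely different from the paper's. The paper argues globally: from $P\omega \subseteq P'$ it gets $\p\,\nrd(\omega) \subseteq \nrd(P')$, and from $\Or\p \subseteq P'$ it gets $\p^n \subseteq \nrd(P')$; since $\nrd(\omega)$ is coprime to $\p$ these sum to $\p$, giving $\p \subseteq \nrd(P')$. For the reverse inclusion it asserts $\nrd(P\omega + \Or\p) \subseteq \nrd(P\omega) + \nrd(\Or\p) = \p$. Your prime-by-prime computation sidesteps this containment and makes the role of the $\Or\p$ term (clearing $P'$ away from $\p$) more transparent.

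There is, however, a slip at the prime $\p$. You write ``$\omega \in \Or_{(\p)}^\times$ so $P_{(\p)}\omega = P_{(\p)}$ as left $\Or_{(\p)}$-modules, whence $P'_{(\p)} = P_{(\p)} + \Or_{(\p)}\p = P_{(\p)}$.'' Right multiplication by a unit yields an \emph{isomorphic} left module, not the same submodule of $B$; in general $P_{(\p)}\omega \neq P_{(\p)}$ --- indeed, the entire metacommutation phenomenon rests on this inequality. What you actually need is that $P_{(\p)}\omega$ is itself a left ideal of reduced norm $\p R_{(\p)}$ (because $\nrd(\omega) \in R_{(\p)}^\times$), hence maximal in the maximal order $\Or_{(\p)}$, hence contains $\Or_{(\p)}\p$ by exactly the reasoning you give two lines later for $P_{(\p)}$. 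Then $P'_{(\p)} = P_{(\p)}\omega + \Or_{(\p)}\p = P_{(\p)}\omega$, and $\nrd(P'_{(\p)}) = \p R_{(\p)}$ follows. With this correction your argument goes through cleanly.
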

\begin{proof} 
First, $P'$ is finitely generated because $P$ is an ideal and hence a lattice, which is finitely generated, and $\p$ is finitely generated over $R$ since $R$ is Noetherian. Therefore, $P\omega$ and $\Or \p$ are also finitely generated. We also have $\Or \p \subseteq P'$ and $\Or \p K=\Or K=B \subseteq \Or P'$, so $P'K=B$. Therefore, $P'$ is an $\Or$-lattice in $B$. Finally, we have $\Or \subseteq \Or_L(P)$, so $\Or \subseteq \Or_L(P \omega)$. Also, $\Or =\Or_L(\Or \p)$ and so $\Or \subseteq \Or_L(P \omega) \cap \Or_L(\Or \p) \subseteq \Or_L(P\omega+\Or \p)=\Or_L(P')$. Thus, $P'$ is a left $\Or$-ideal. 

To show that $P'$ has reduced norm $\p$, first note that $P \omega \subseteq P'$, so $\nrd(P\omega)\subseteq \nrd(P')$. We also have $\nrd(P \omega)=\p\nrd(\omega),$ so $\p\nrd(\omega) \subseteq \nrd(P').$ Similarly, $\p \subseteq P'$, so $\nrd(\p)=R \p^n \subseteq \nrd(P')$. Then, $\p^n + \p\nrd(\omega)=\p \subseteq \nrd(P')$. Finally, $\nrd(P')=\nrd(P \omega+\Or \p) \subseteq \nrd(P \omega)+\nrd (\Or \p)=\p \nrd(\omega)+ \p^n=\p$, so equality holds and $\nrd(P')=\p.$ \qedhere 
\end{proof}

Now, let $\Id(\Or;\p)$ be the set of ideals in $\Or$ of reduced norm $\p$ and define the map $$\sigma_\omega \colon \Id(\Or;\p) \rightarrow \Id(\Or;\p)$$ $$P \mapsto P\omega+\Or \p.$$ When both definitions of $\sigma_\omega$ are relevant, then they are the same, as shown in the following lemma.

\begin{lemma} If all left ideals of $\Or$ are principal, then the map $\sigma_\omega$ is the map obtained via metacommutation by $\omega$ described in Lemma \ref{lem2}. 
\end{lemma}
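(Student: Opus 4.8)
The plan is to unwind both definitions of $\sigma_\omega$ on a principal left ideal and to observe that they name the same $\Or$-ideal. The key background identification is that, since $\Or$ is a left PIR, the assignment $\pi \mapsto \Or\pi$ gives a bijection between $\Id(\Or;R\nu)$ (elements of reduced norm $\nu$ up to left multiplication by $\Or^\times$) and $\Id(\Or;\p)$ (left ideals of reduced norm $\p = R\nu$): every such ideal is principal, a generator has reduced norm generating $\p$, and two generators of the same ideal differ by an element of $\Or^\times$ because any generator is a unit in $B$. So it suffices to compare the images of a class under this dictionary.

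First I would take $P = \Or\pi$ with $\nrd(\pi) = \nu$ and compute the ideal-theoretic map: because $\p = R\nu$ is generated by the central element $\nu$, one has
\[
\sigma_\omega(P) \;=\; P\omega + \Or\p \;=\; \Or(\pi\omega) + \Or\nu .
\]
Then I would run the metacommutation construction on the same data: set $\alpha = \pi\omega$, so that $\nrd(\alpha) = \nu\,\nrd(\omega)$, and note $R\nu + R\nrd(\omega) = R$ since $\omega \in \Or_{(\p)}^\times$ forces $\p \nmid \nrd(\omega)$. Applying Theorem~\ref{thm0} with $a_1 = \nrd(\omega)$ and $a_2 = \nu$ (prime up to units), and reading off its proof, the factorization $\alpha = \omega'\pi'$ it produces has $\Or\pi' = \Or\alpha + \Or a_2 = \Or(\pi\omega) + \Or\nu$, and by definition $\pi'$ is the image of $\pi$ under the metacommutation map of Lemma~\ref{lem2}. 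Comparing the two displays gives $\sigma_\omega(\Or\pi) = \Or\pi'$, which is precisely the assertion that the two maps agree under $\pi \leftrightarrow \Or\pi$.

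I do not expect a serious obstacle here, since the content is essentially bookkeeping. The two points that need care are (i) checking that $\pi \leftrightarrow \Or\pi$ is well defined and bijective at the level of left-$\Or^\times$-orbits and matches the two index sets, which uses that generators are units in $B$ together with the PIR hypothesis, and (ii) verifying the coprimality hypothesis of Theorem~\ref{thm0}, which is exactly where $\omega \in \Or_{(\p)}^\times \cap \Or$ is used. The reduced-norm computations $\nrd(\Or\pi) = R\,\nrd(\pi)$ and $\nrd(P') = \p$ are the fiddliest ingredients, but both are already established in the lemmas above, so I would simply cite them rather than redo them.
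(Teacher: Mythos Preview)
Your proposal is correct and follows essentially the same route as the paper: identify $P=\Or\pi$ and $P'=\Or\pi'$, and check that the $\pi'$ generating $P'$ is the metacommutation image of $\pi$. The only minor difference is that the paper argues via the containment $\pi\omega\in P'=\Or\pi'$ and then invokes the uniqueness clause of Theorem~\ref{thm0}, whereas you quote the explicit construction $\Or\pi'=\Or\alpha+\Or a_2$ from the \emph{proof} of Theorem~\ref{thm0} to match $P'$ directly; both are the same bookkeeping in slightly different order.
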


\begin{proof} Define $P'=P\omega+\Or \p$ as before. If all left ideals of $\Or$ are principal, then $P=\Or \pi$ and $P'=\Or \pi'$ for some $\pi, \pi' \in \Or$ with $R\nrd(\pi)=R\nrd(\pi')=\p$. Then, $\pi \omega \in P'$, so $\pi \omega \in \Or \pi'$. Finally, we have $\pi \omega =\omega' \pi'$ for some $\omega' \in \Or$, and we recover the permutation given by metacommutation. \qedhere 
\end{proof}

We note that we only needed the ideals in $\Id(\Or;\p)$ to be principal in order to obtain the factorization described in Lemma \ref{lem2} and the following definition of $\sigma_\omega$.

\begin{theorem}
The map $\sigma_\omega$ is a permutation of the set $\Id(\Or;\p)$. 
\end{theorem}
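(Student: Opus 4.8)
The plan is to show that $\sigma_\omega$ is a bijection by exhibiting an explicit two-sided inverse. The natural candidate is the map $\sigma_{\omega'}$ for a suitable element $\omega'$ related to $\omega$; concretely, since $\omega \in \Or_{(\p)}^\times$, the reduced norm $\nrd(\omega)$ is a unit at $\p$, so $\omega$ is invertible in $B$ and in $\Or_{(\p)}$, and one expects $P \mapsto P\omega^{-1} + \Or\p$ (interpreted appropriately, after clearing denominators prime to $\p$) to undo $\sigma_\omega$. First I would record that $\Id(\Or;\p)$ is a finite set: left ideals of reduced norm $\p$ correspond, after localizing and completing at $\p$, to left ideals of $\Or_\p$ of reduced norm $\p\Or_\p$, and since $\Or_\p$ is maximal there are only finitely many such (they are parametrized by $\Pj^{m-1}(\F_q)$, as the paper goes on to explain). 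For a self-map of a finite set, injectivity and surjectivity are equivalent, so it suffices to prove one of them; I would aim for injectivity via the inverse-map computation, which also makes the later identification with $\tau$ cleaner.

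The key steps, in order. (1) Show that $\sigma_\omega$ depends only on the local data at $\p$: since $\nrd(P) = \p$, the ideal $P$ agrees with $\Or$ at every prime $\mathfrak{q}\neq\p$, so $P\omega + \Or\p$ is determined by $P_\p$, $\omega$ mod a high power of $\p$, and $\p\Or_\p$; this reduces the whole statement to a statement about left ideals of the maximal order $\Or_\p$. (2) In $\Or_\p$, use the structure theory of maximal orders over a complete DVR: $\Or_\p \cong \M_s(\mathcal{O}_D)$ for the maximal order $\mathcal{O}_D$ in a division algebra $D$ over $K_\p$, and left ideals of reduced norm $\p\Or_\p$ are in bijection with the flag-type data that, modulo the Jacobson radical $J_\p$, become the kernels of rank-$(m-1)$ matrices over $\F_q$ — i.e. points of $\Pj^{m-1}(\F_q)$. (3) Observe that right multiplication by $\omega$ carries a left ideal to a left ideal of the same reduced norm (up to the unit $\nrd(\omega)$, which is trivial here since we re-add $\Or\p$), and then check that $P \mapsto P\omega + \Or\p$ followed by $P \mapsto P\eta + \Or\p$, where $\eta \in \Or_{(\p)}^\times\cap\Or$ is chosen with $\eta\omega \equiv 1 \pmod{\p^N}$ for $N$ large, returns $P$; the point is that $P(\eta\omega) + \Or\p = P + \Or\p = P$ once $\eta\omega \equiv 1$ modulo a high enough power of $\p$ that $P\p^{N}\subseteq \Or\p \subseteq P$. (4) Conclude that $\sigma_\omega$ has a two-sided inverse, hence is a permutation.

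The main obstacle I anticipate is Step (3): making precise the claim that one can replace $\omega$ by an element $\eta$ of $\Or_{(\p)}^\times\cap\Or$ acting as a global "inverse mod $\p^N$," and verifying that $P\omega+\Or\p$ genuinely only sees $\omega$ modulo $\p^N$ for $N$ sufficiently large (so that the composite collapses to $P + \Or\p$). This requires knowing that $P$ contains $\Or\p$ — which is immediate from the definition of $P' = P\omega+\Or\p$ in the image, but for a general $P \in \Id(\Or;\p)$ one must argue that $\nrd(P)=\p$ forces $\p\Or \subseteq P$, equivalently $\p\Or_\p \subseteq P_\p$, which holds because $P_\p$ is a left ideal of the maximal order $\Or_\p$ with $\nrd(P_\p)=\p\Or_\p$ and hence $J_\p$ (the radical, whose relevant power is $\p\Or_\p$) annihilates $\Or_\p/P_\p$ on the right; this is exactly the containment needed. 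Once that containment and the mod-$\p^N$ dependence are nailed down, the inverse computation is formal, and finiteness of $\Id(\Or;\p)$ gives an alternative, shorter route should the explicit inverse prove awkward to write down cleanly.
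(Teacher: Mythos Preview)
Your proposal is correct and pursues the same inverse-map strategy as the paper, but you make it considerably harder than necessary. The paper's proof is essentially two lines: first the composition identity
\[
\sigma_{\omega_1}\circ\sigma_{\omega_2}=\sigma_{\omega_2\omega_1},
\]
which follows straight from the definition via $(P\omega_2+\Or\p)\omega_1+\Or\p=P\omega_2\omega_1+\Or\p\omega_1+\Or\p=P\omega_2\omega_1+\Or\p$; second, since $\omega^{-1}\in\Or_{(\p)}$ there is $b\in R\smallsetminus\p$ with $b\omega^{-1}\in\Or_{(\p)}^\times\cap\Or$, and then $\sigma_\omega\circ\sigma_{b\omega^{-1}}=\sigma_{b}$ with $\sigma_b(P)=Pb+\Or\p=P$. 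No completions, no structure theory of $\Or_\p$, no mod-$\p^N$ approximation: the element $b\omega^{-1}$ already lies in $\Or$ on the nose and satisfies $(b\omega^{-1})\omega=b$ central, so your entire Step~(3) collapses. Your detour through the $\Pj^{m-1}(\F_q)$ parametrization does work and gives finiteness of $\Id(\Or;\p)$ as a fallback, but it front-loads the content of the next section and is overkill for this statement.

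Two small remarks. You correctly flag the containment $\Or\p\subseteq P$ as something requiring justification; the paper uses it silently in the step $Pb+\Or\p=P$, so your instinct there is good. And watch the order in your Step~(3): applying $\sigma_\omega$ then $\sigma_\eta$ sends $P$ to $P\omega\eta+\Or\p$, so the relevant congruence is $\omega\eta\equiv 1$, not $\eta\omega\equiv 1$.
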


\begin{proof}
We will show that $\omega \in \Or_{(\p)}^\times \cap \Or$, $\sigma_\omega$ is a bijection by producing an inverse map. First, we show that for $\omega_1, \omega_2 \in \Or_{(\p)}^\times \cap \Or$, we have $\sigma_{\omega_1}\sigma_{\omega_2}=\sigma_{\omega_2\omega_1}$. Now, $\sigma_{\omega_1}\sigma_{\omega_2}(P)=\sigma_{\omega_1}(P\omega_2+\Or \p)=(P\omega_2+\Or \p)\omega_1+\Or \p=P\omega_2\omega_1+\Or \p \omega_1+\Or \p=P \omega_2\omega_1+\Or \p=\sigma_{\omega_2\omega_1}$. Also, there is an element $b\in R$ such that $b\omega^{-1} \in \Or$. Then, $\sigma_{\omega}\sigma_{b\omega^{-1}}=\sigma_b=Pb+\Or \p=P$. Therefore, $\sigma_\omega$ is a permutation whose inverse is given by $\sigma_{b\omega^{-1}}$. 
\end{proof}

\section{An action of matrices on projective space}

We now describe the permutation $\sigma_\omega$ as a group action of $\GL_m(\F_q)$ on $\mathbb{P}^{m-1}(\F_q)$. We will do so via the completion of $\Or$ at $\p$, and by reduction modulo the Jacobson radical. The map $I \mapsto I_\p \colonequals I \otimes_R R_\p$ is a bijection between $\Id(\Or;\p)$ and the ideals of reduced norm $\p R_{\p}$ in $\Or_{\p}$ by the local-global dictionary for lattices and \cite[Theorem 5.2(iii),5]{R}, where $\Or_{\p} \simeq \Or \otimes R_{\p}$ is the completion at $\p$. Then, $P_\p \mapsto P_\p'=P_\p\omega+\Or_\p \p$ if and only if $P=P_\p \cap \Or \mapsto (P_\p \omega + \Or_\p \p) \cap \Or=P \omega+\Or \p=P'$. It therefore suffices to study the local case, and we recover the global case through this correspondence. To simplify notation, throughout this section, let $R$ be the valuation ring of a local field $K$ and let $\p$ be its unique maximal ideal. Define $\F_\p \colonequals R/\p$. Let $B$ be a central simple $K$-algebra and let $\Or$ be an $R$-order in $B$. We then consider the action of $\Or^\times$ on $\Id(\Or;\p)$ given by $\omega \cdot P=P\omega+\Or \p$.

\subsection*{A correspondence of ideals with matrices}
Let $J \colonequals \rad \Or$ be the Jacobson radical of $\Or$. The left ideals of $\Or$ with reduced norm $\p$ correspond to their images under the reduction map  modulo $J$ \cite{R}. By the Wedderburn-Artin Theorem, since $B$ is a central simple algebra over $K$, there is an isomorphism $B \simeq \M_m(D)$ of $K$-algebras for some $m \in \Z_{>0}$, where $D$ is a division algebra over $K$ of dimension $t^2$ and $n=tm$. Now, $D$ contains a unique maximal order $\Lambda$ with Jacobson radical $J_\Lambda \colonequals \rad \Lambda$, and $\Or \simeq \M_m(\Lambda)$ \cite[Section 5.17]{R}. We then have the following lemma.

\begin{lemma}
Let $J$ be the Jacobson radical of $\Or$. Then, there is an isomorphism $\Or/J \simeq \M_m(\F_q)$, where $\F_q$ is a finite extension of $\F_\p$. 
\end{lemma}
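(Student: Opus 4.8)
The plan is to use the Wedderburn structure theory already recalled just before the lemma, together with the description of the maximal order $\Lambda$ in a local division algebra, and then pass to residue fields.

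First I would recall that $\Or \simeq \M_m(\Lambda)$ where $\Lambda$ is the unique maximal order in the local division algebra $D$. The Jacobson radical is compatible with this matrix presentation: $\rad \M_m(\Lambda) = \M_m(\rad \Lambda) = \M_m(J_\Lambda)$. This is a standard fact (e.g.\ \cite[Section 6]{R}), since $\M_m(J_\Lambda)$ is a two-sided ideal which is nilpotent modulo any power and the quotient $\M_m(\Lambda)/\M_m(J_\Lambda) \simeq \M_m(\Lambda/J_\Lambda)$ is semisimple. Hence $J = \M_m(J_\Lambda)$ and $\Or/J \simeq \M_m(\Lambda/J_\Lambda)$.

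Next I would identify $\Lambda/J_\Lambda$. For the unique maximal order $\Lambda$ in a central division algebra $D$ over a local field $K$, the quotient $\Lambda/J_\Lambda$ is a finite field; this is part of the classical structure theory of division algebras over local fields (see \cite[Section 14]{R}). Concretely, if $D$ has invariant of order $t$ in the Brauer group, then $\Lambda/J_\Lambda \simeq \F_{q}$ where $q = (\#\F_\p)^{t}$ — an unramified extension of the residue field $\F_\p$ of degree $t$. I would just cite this and set $\F_q := \Lambda/J_\Lambda$, noting it is a finite extension of $\F_\p = R/\p$ (finite since $R/\p$ is finite, as $K$ is a local field arising from a global field). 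Combining the two steps gives $\Or/J \simeq \M_m(\F_q)$ as claimed.

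**The main obstacle** is really just bookkeeping: making sure the isomorphism $\Or \simeq \M_m(\Lambda)$ is chosen compatibly so that $J$ genuinely corresponds to $\M_m(J_\Lambda)$ rather than merely being abstractly isomorphic to it, and correctly quoting that $\Lambda/J_\Lambda$ is a finite field of the stated degree. There is no hard inequality or delicate estimate here — the content is entirely in assembling the cited structural results in the right order. I would therefore keep the proof short, citing \cite{R} for (i) $\Or\simeq \M_m(\Lambda)$, (ii) $\rad(\M_m(\Lambda)) = \M_m(J_\Lambda)$, and (iii) $\Lambda/J_\Lambda$ is a finite (unramified, degree $t$) extension of $\F_\p$, and then concluding $\Or/J \simeq \M_m(\F_q)$ with $q = (\#\F_\p)^t$.
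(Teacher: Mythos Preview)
Your proposal is correct and follows essentially the same route as the paper: both reduce to $\Or/J \simeq \M_m(\Lambda/J_\Lambda)$ via Reiner and then identify $\Lambda/J_\Lambda$ as a finite field extension of $\F_\p$. The only minor difference is that you invoke the explicit structure theory of local division algebras (obtaining the degree $t$ for free), whereas the paper simply observes that $\Lambda/J_\Lambda$ is a finite division ring and applies Wedderburn's little theorem.
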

 
\begin{proof} There is an isomorphism $\Or/J \simeq \M_m(\Lambda/J_\Lambda)$ as proven by Reiner \cite[Section 5.17]{R}. The algebra $\Lambda/J_\Lambda$ is a finite-dimensional division algebra over $\F_\p=R/\p$. By Wedderburn's little theorem, $\Lambda/J_\Lambda \simeq \F_q$, where $\F_q$ is a field extension of $\F_\p$. We then have $\Or/ J\simeq \M_m(\F_{q})$.
\end{proof} 

Define $\rho \colon \Or \rightarrow \Or/J \simeq \M_m(\F_q)$ be the reduction map modulo $J$. Define $M_m^{(m-1)}$ to be the set of elements of rank $m-1$ in $\M_m(\F_q)$ up to left multiplication by units. We then have the following correspondence.

\begin{lemma}
The maximal left ideals of $\Or$ correspond to the maximal left ideals of $\M_m(\F_q)$, by $P \mapsto \rho(P)$. 
\end{lemma}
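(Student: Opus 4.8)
The plan is to establish the correspondence $P \mapsto \rho(P)$ between maximal left ideals of $\Or$ and maximal left ideals of $\M_m(\F_q)$ by exploiting the fact that $J = \rad\Or$ is contained in every maximal left ideal of $\Or$, so that such ideals are in bijection with the maximal left ideals of the quotient $\Or/J \simeq \M_m(\F_q)$. First I would recall the standard ring-theoretic fact: for any ring $\Or$ with Jacobson radical $J$, the maximal left ideals of $\Or$ are exactly the preimages under the quotient map $\rho\colon \Or \to \Or/J$ of the maximal left ideals of $\Or/J$; this is because $J$ lies in every maximal left ideal (by definition of the radical as the intersection of all maximal left ideals), and the correspondence theorem for ideals containing $J$ then identifies maximal left ideals of $\Or$ with maximal left ideals of $\Or/J$, preserving the partial order by inclusion and hence maximality. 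Thus $P \mapsto \rho(P)$ and $\overline{M} \mapsto \rho^{-1}(\overline{M})$ are mutually inverse bijections.

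Next I would connect this to the geometric picture by identifying the maximal left ideals of $\M_m(\F_q)$ concretely. Since $\F_q$ is a field, $\M_m(\F_q)$ is a simple Artinian ring, and its maximal left ideals are precisely the annihilators of nonzero vectors (equivalently, the sets of matrices whose rows all lie in a fixed hyperplane of $\F_q^m$, or dually the left ideals consisting of matrices with a prescribed one-dimensional common right kernel). These are parametrized by $\Pj^{m-1}(\F_q)$: each corresponds to the matrices of rank at most $m-1$ killing a given line, and a generic such matrix has rank exactly $m-1$. This links the present lemma to the set $M_m^{(m-1)}$ just defined and sets up the subsequent identification of $\Id(\Or;\p)$ with $\Pj^{m-1}(\F_q)$.

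Finally I would verify that this abstract bijection respects reduced norms, i.e. that a left ideal $P \subseteq \Or$ has reduced norm $\p$ precisely when it is maximal, so that $\Id(\Or;\p)$ really is the set under consideration. Using $\Or \simeq \M_m(\Lambda)$ with $\Lambda$ the maximal order in the division algebra $D$, and the fact that $\rad\Or$ pulls back from $J_\Lambda$, one checks that a left ideal of reduced norm $\p$ is exactly one that contains $J$ and is maximal among proper left ideals — this uses the multiplicativity of the reduced norm and that $\nrd(J) = \p$ (equivalently, that $\Or/J$ is the residue quotient). I expect the main obstacle to be precisely this last step: carefully matching the reduced-norm condition $\nrd(P) = \p$ with maximality of $P$ as a left ideal, since reduced norms of ideals in orders in division algebras behave slightly subtly (the ramification index $t$ enters, as $\nrd(\Or\p) = \p^n$ with $n = tm$), and one must confirm that "reduced norm $\p$" corresponds to dropping exactly one dimension modulo $J$ rather than to some other codimension. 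Once that identification is pinned down, the correspondence of maximal ideals is immediate from the radical argument above.
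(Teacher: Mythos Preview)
Your first paragraph is correct and already proves the lemma as stated: since $J=\rad\Or$ is by definition the intersection of all maximal left ideals, every maximal left ideal contains $J$, and the lattice isomorphism theorem gives the desired bijection $P\mapsto\rho(P)$ between maximal left ideals of $\Or$ and of $\Or/J\simeq\M_m(\F_q)$. The paper's own proof is simply a citation to Reiner \cite[Section 5.17]{R}, so your argument is more explicit but follows the same standard route.

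Your second and third paragraphs, however, are not part of this lemma. The concrete description of maximal left ideals of $\M_m(\F_q)$ as rank-$(m-1)$ principal ideals, and the identification of $\Id(\Or;\p)$ with the set of maximal left ideals, are exactly the content of the \emph{next} lemma in the paper (Lemma~\ref{lem1}), which the paper proves separately. In particular, the ``main obstacle'' you flag---matching $\nrd(P)=\p$ with maximality---is handled there, not here. So your proposal is sound, but you have folded two lemmas into one; for the present statement the radical/correspondence argument alone suffices.
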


\begin{proof}
See Reiner \cite[Section 5.17]{R}.
\end{proof}

\begin{lemma} \label{lem1}
The map $\Id(\Or;\p) \mapsto M_m^{(m-1)}$ given by $P \mapsto \rho(P)$ is a bijection for some $m \in \Z_{>0}$, where $\F_q$ is a finite extension of $\F_\p$. 
\end{lemma}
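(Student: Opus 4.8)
The plan is to combine the two preceding lemmas. The first of them gives a bijection between the maximal left ideals of $\Or$ and the maximal left ideals of $\M_m(\F_q)$ via $P \mapsto \rho(P)$; the isomorphism $\Or/J \simeq \M_m(\F_q)$ of the lemma before that identifies these with the maximal left ideals of the matrix ring. So the first step is to argue that the ideals $P \in \Id(\Or;\p)$ — the left $\Or$-ideals of reduced norm exactly $\p$ — are precisely the maximal left ideals of $\Or$. One inclusion is clear: a left ideal of reduced norm $\p$ must be maximal, since any strictly larger proper left ideal would have reduced norm a proper divisor of $\p$ above $R$, forcing reduced norm $R$, i.e. the whole order (using that $\Or_\p$ is maximal, so reduced norms of left ideals are exactly the powers of $\p R_\p$, and here we may work locally by the correspondence set up at the start of this section). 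Conversely, every maximal left ideal contains $J$, hence contains $\Or\p$ once we note $\p\Or \subseteq J$ in the local maximal case, and then one checks that $\Or/\Or\p$ is semisimple so its maximal left ideals all have the expected reduced norm $\p$; alternatively, maximal left ideals of $\M_m(\Lambda)$ correspond to the ``rank $m-1$'' ones and these have reduced norm $J_\Lambda \cap R = \p$. Either way one gets that maximal left ideals are exactly $\Id(\Or;\p)$.

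The second step is to identify the maximal left ideals of $\M_m(\F_q)$ with $M_m^{(m-1)}$, the set of rank-$(m-1)$ matrices up to left multiplication by units. A left ideal of $\M_m(\F_q)$ is $\M_m(\F_q)A$ for some matrix $A$, and its ``dimension'' (as an $\F_q$-space, or equivalently the dimension of the associated subspace of column vectors annihilated on the left — more precisely, $\M_m(\F_q)A$ depends only on the row space of $A$) is $m \cdot \rank(A)$; maximality among proper left ideals is thus equivalent to $\rank(A) = m-1$. Two such matrices $A, A'$ generate the same left ideal iff $A' = UA$ for a unit $U \in \GL_m(\F_q)$, so the assignment $\M_m(\F_q)A \mapsto (\text{class of } A)$ is a bijection onto $M_m^{(m-1)}$. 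Composing the bijections of the two previous lemmas with this one, and with the identification of $\Id(\Or;\p)$ with the maximal left ideals from the first step, gives the desired bijection $P \mapsto \rho(P)$.

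I expect the main obstacle to be the bookkeeping in the first step: carefully justifying that ``reduced norm $\p$'' matches ``maximal left ideal.'' The cleanest route is to pass to the completion (legitimate by the opening paragraph of Section 4, which gives $I \mapsto I_\p$ a bijection preserving reduced norms) and then to $\Or_\p \simeq \M_m(\Lambda)$ with $\Lambda$ the maximal order in the division algebra $D$; there, reduced norms of left ideals are explicitly the powers of $\p$, the unique maximal two-sided ideal sits as $\M_m(J_\Lambda)$, and left ideals correspond to $\Lambda$-submodules of $\Lambda^m$ (equivalently, after reducing mod $J_\Lambda$, to $\F_q$-subspaces of $\F_q^m$), so the rank-$(m-1)$ condition, maximality, and reduced norm $\p$ are manifestly the same condition. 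I would cite \cite[\S5.17]{R} for the facts about $\M_m(\Lambda)$ and its ideals and keep the write-up short, since each individual claim is standard once the local matrix picture is in place.
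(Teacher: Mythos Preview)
Your proposal is correct and follows essentially the same two-step structure as the paper: first invoke the preceding lemmas to reduce to maximal left ideals of $\M_m(\F_q)$, then identify those with rank-$(m-1)$ matrices up to left units. The one notable difference is technical: for the claim ``maximal left ideal $\Leftrightarrow$ generated by a rank-$(m-1)$ matrix,'' the paper argues hands-on with reduced row echelon form and explicit rank-increasing perturbations $A+EB$, whereas you use the cleaner observation that $\M_m(\F_q)A$ depends only on $\rowspace(A)$ and has $\F_q$-dimension $m\cdot\rank(A)$, so maximality forces $\rank(A)=m-1$. Your route is shorter and more conceptual; the paper's is more self-contained for a reader unfamiliar with the row-space description of one-sided ideals in matrix rings. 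You also spend more care than the paper justifying that ``reduced norm $\p$'' coincides with ``maximal left ideal''; the paper simply asserts this parenthetically, so your extra paragraph there is not strictly needed but does no harm.
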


\begin{proof} By the preceding lemma, the maximal left ideals of $\Or$ (those of reduced norm $\p$) correspond to the maximal left ideals of $\Or/J \simeq \M_m(\F_q)$ by mapping $P$ to $\rho(P)$. We now show that a left ideal $I \subseteq \M_n(\F_q)$ is maximal if and only if it is generated by an element of rank $m-1$. 

If $I$ is maximal, let $A \in I$ be an element of maximal rank $r$, which we can take to be in reduced row echelon form. Let $B \notin \M_m(\F_q)A$ be arbitrary. By elementary row operations, there is a matrix $E$ such that $\rank(EB)=1$ and the only nonzero row of $EB$ is the $m$th row which is not in the rowspan of $A$. We then have $\rank(A+EB)=r+1$ and $A+EB \notin I$ by maximality of $r$. If follows that $B \notin I$ so $I=\M_m(\F_q)A$ since $B$ was arbitrary. To show that $r=m-1$, note that $I=\M_m(\F_q)A \subset \M_m(\F_q)(A+EB)=\M_m(\F_q)$ by maximality of $I$, so $A+EB \in \GL_m(\F_q)$ and $r+1=m$. 

Conversely, if $I=\M_m(\F_q)A$ is generated by an element $A$ of rank $m-1$, let $I'\subseteq \M_m(\F_q)$ be a left ideal such that $I \subset I'$. Choose $E \in \M_m(\F_q)$ and $B \notin I$ as before, so $\rank(A+EB)=r+1=m$ and $A+EB \in \GL_m(\F_q)$. Then $I'=\M_m(\F_q)$ so $I$ is maximal. \qedhere 

\end{proof}


\subsection*{An action of matrices}
We now describe the action of $\Or^\times$ on $\Id(\Or;\p)$ as an action of $\GL_m(\F_q)$ on $\mathbb{P}^{m-1}(\F_q)$ via the bijection $P \leftrightarrow \rho(P) \leftrightarrow \ker(P)$. 

Let $P \in \Id(\Or;\p)$. By Lemma \ref{lem1}, $\rho(P)=\M_m(\F_q)A$ with $A$ of rank $m-1$. Since $P \mapsto P'=P \omega+\Or \p$, we have $\rho(P')=\M_m(\F_q)AQ=\M_m(\F_q) A'$, where $A'$ has rank $m-1$ and hence $AQ=Q'A'$ for some $Q' \in \GL_m(\F_q)$. For each $Q \in \GL_m(\F_q)$, define a map $$\sigma_Q \colon M_m^{(m-1)} \rightarrow M_m^{(m-1)}$$ $$A \mapsto AQ.$$ It is well-defined for $C A \mapsto C AQ \sim AQ$ for $C \in \GL_m(\F_q)$, and it is a bijection because its inverse is given by $\sigma_Q^{-1}(A)=AQ^{-1}$. 

\begin{theorem}\label{thm2}
The map $\sigma_\omega$ is a permutation of the set $\Id(\Or;\p)$ induced by $\omega \in \Or^\times$ and is the same as the permutation $\sigma_Q$ by identifying each $P$ with the set of generators of $\rho(P)$; i.e. the following diagram commutes.

$$\xymatrix{
{\Or^\times}\ar^{\sigma\hspace{.35in}}[r]\ar_{\rho}@{^{}->}[d]&{\Sym(\Id(\Or;\p))}\ar^{\sidesim}[d]\\
{\GL_m(\F_q)}\ar^{\tau\hspace{.15in}}[r]&{\Sym(M_m^{m-1})}
}$$

\end{theorem}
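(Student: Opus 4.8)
The plan is to verify the commutativity of the diagram in Theorem~\ref{thm2} by chasing an arbitrary ideal $P \in \Id(\Or;\p)$ around both paths and checking that the results agree. The key observation, already essentially recorded in the discussion preceding the statement, is that applying $\rho$ turns the operation $P \mapsto P\omega + \Or\p$ into right multiplication by $Q = \rho(\omega)$ on the image $\rho(P) \subseteq \M_m(\F_q)$. Concretely, I would first note that $\rho(\Or\p) = 0$ because $\p \Or \subseteq J$ (the Jacobson radical of a maximal order in a local central simple algebra contains $\p\Or$), so $\rho(P') = \rho(P\omega + \Or\p) = \rho(P)\rho(\omega) = \rho(P)Q$. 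Since $\rho(P) = \M_m(\F_q)A$ for some $A$ of rank $m-1$ by Lemma~\ref{lem1}, we get $\rho(P') = \M_m(\F_q)AQ$, i.e. the generator class of $\rho(P')$ is $\sigma_Q(\rho(P))$ in $M_m^{(m-1)}$. This is exactly the statement that the square commutes on objects.

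Second, I would check that the assignment is compatible with the group structure, i.e. that $\tau$ and $\sigma$ are genuinely the homomorphisms the diagram asserts and that $\rho$ intertwines them. From the earlier theorem we already know $\sigma_{\omega_1}\sigma_{\omega_2} = \sigma_{\omega_2\omega_1}$, and the analogous identity $\sigma_{Q_1}\sigma_{Q_2} = \sigma_{Q_2 Q_1}$ holds for right multiplication on $M_m^{(m-1)}$; combined with the multiplicativity $\rho(\omega_1\omega_2) = \rho(\omega_1)\rho(\omega_2)$ of the reduction map, this shows the diagram is compatible with composition, so it suffices to have checked commutativity at the level of a single $\omega$ and a single $P$. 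Here I would also remark that $\omega \in \Or^\times$ forces $\rho(\omega) \in (\Or/J)^\times = \GL_m(\F_q)$, so $Q = \rho(\omega)$ really does land in $\GL_m(\F_q)$ and the vertical maps are well-defined.

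Third, I would make explicit the right-hand vertical identification $\Sym(\Id(\Or;\p)) \xrightarrow{\ \sim\ } \Sym(M_m^{(m-1)})$: it is conjugation by the bijection $P \mapsto \rho(P)$ of Lemma~\ref{lem1}. With that bijection fixed, the content of the theorem is the single equation $\rho(\sigma_\omega(P)) = \sigma_{\rho(\omega)}(\rho(P))$, which is precisely what was established in the first step. Finally, I would translate this into the $\mathbb{P}^{m-1}(\F_q)$ picture promised in the introduction: a rank $m-1$ matrix $A$ has a one-dimensional kernel $\ker A \in \mathbb{P}^{m-1}(\F_q)$, the map $A \mapsto \ker A$ descends to a bijection $M_m^{(m-1)} \to \mathbb{P}^{m-1}(\F_q)$ (two such matrices are left-associate iff they have the same kernel), and $\ker(AQ) = Q^{-1}\ker(A)$, so $\sigma_Q$ corresponds to $\tau_Q \colon v \mapsto Q^{-1}v$. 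This gives the clean form of the main theorem stated in the introduction.

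The only genuinely delicate point — and the one I would spell out carefully rather than wave at — is the claim that $\rho$ kills $\Or\p$ and more generally that the reduction map behaves well on ideals of reduced norm $\p$, i.e. that $\rho(P)$ really is a rank $m-1$ matrix ideal and that $P \mapsto \rho(P)$ is a bijection onto $M_m^{(m-1)}$ as a \emph{set}, not merely a surjection. This is handled by Lemma~\ref{lem1} together with the identification $\Or/J \simeq \M_m(\F_q)$, so the remaining work is just bookkeeping: confirming that $P\omega$ reduces to $\rho(P)\rho(\omega)$ as left ideals (clear, since $\rho$ is a ring homomorphism and $P\omega$ is generated as an additive group by products), and confirming that the kernel-of-a-generator map is independent of the chosen generator. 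Neither of these is hard, but both should be stated so the chase is airtight.
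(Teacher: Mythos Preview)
Your argument is correct and follows essentially the same route as the paper: the core step in both is that $\p \subseteq J$ forces $\rho(P\omega+\Or\p)=\rho(P)\,\rho(\omega)=\M_m(\F_q)AQ$, so the generator class of $\rho(P')$ is $\sigma_Q(A)$. You supply more detail than the paper (group compatibility, the explicit conjugation interpretation of the right vertical map), and your final paragraph on kernels is really the content of the \emph{next} theorem (Theorem~\ref{thm3}) rather than this one, but nothing is wrong.
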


\begin{proof}
For an ideal $P \subseteq \Or$ of reduced norm $\p$, we have $P'=P\omega+\Or \p$. Let $A$ and $A'$ be generators of $\rho(P)$ and $\rho(P)'$, respectively as defined above, and let $Q =\rho(\omega) \in \GL_m(\F_q)$ be the image of $\omega$ under the map of reduction modulo $J$. We will show that if $\sigma_\omega(P)=P'$, then $\sigma_{Q}(A)=A'$, for then $\sigma_\omega=\sigma_{Q}$ by identifying $P$ with $A$ for each $P$. Since $\p \subseteq J$, we have $\M_m(\F_q)A'=\M_m(\F_q)AQ$. Therefore, $AQ=Q'A'$ for some $Q' \in \GL_m(\F_q)$, so $\sigma_{Q}(A)=A'$. Finally, $\sigma_\omega$ is a permutation and is given by $\sigma_{Q}$, by the identification of $\Id(\Or;\p)$ with $M_m^{(m-1)}$. \qedhere 
\end{proof}

\begin{remark} If $B$ is a division algebra, then $\Or/J \simeq \F_q$ is a finite field, and hence the only permutation induced by an element $\omega \in \Or^\times$ by metacommutation is the identity permutation. On the other hand, if $B \simeq \M_n(K)$, then $\Lambda=R$ and $J=\p$, so $\F_q=R/\p=\F_\p$ and $\Or/J \simeq \M_n(\F_\p)$. 
\end{remark}

\subsection*{Metacommutation as an action of matrices on projective space}

It now suffices to study $\sigma_Q$ for $Q \in \GL_m(\F_q)$. The kernel of an element in $M_m^{(m-1)}$ is well-defined, since multiplying on the left by a unit does not change it. Since the matrices in $M_m^{(m-1)}$ have rank $m-1$, their kernels are one-dimensional and can be viewed as elements of $\mathbb{P}^{m-1}(\F_q)$. We will use $A$ and $A'$ to denote matrices of rank $m-1$. Define an action of $\GL_m(\F_q)$ on $\mathbb{P}^{m-1}(\F_q)$ by $Q \cdot v=Q^{-1}v$. Define $\tau_Q$ to be the corresponding permutation; i.e., $\tau_Q(v)\colonequals Q^{-1}v$.

\begin{theorem} \label{thm3}
The action of $\GL_m(\F_q)$ on the set $M_m^{(m-1)}$ is equivalent to the action of $\GL_m(\F_q)$ on $\mathbb{P}^{m-1}(\F_q)$; i.e. $\sigma_Q(P)=P'$ if and only if $\tau_Q(v)=v'$, where $v=\ker P$ and $v'=\ker P'$. 
\end{theorem}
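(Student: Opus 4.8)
The plan is to show that the bijection $M_m^{(m-1)} \leftrightarrow \mathbb{P}^{m-1}(\F_q)$ given by $A \mapsto \ker A$ intertwines the permutation $\sigma_Q$ (right multiplication $A \mapsto AQ$, modulo left units) with the permutation $\tau_Q$ (the map $v \mapsto Q^{-1}v$). First I would record that the map $A \mapsto \ker A$ is a well-defined bijection from $M_m^{(m-1)}$ to $\mathbb{P}^{m-1}(\F_q)$: left multiplication by $C \in \GL_m(\F_q)$ does not change the kernel, so the map descends to the quotient by left units; conversely, given a one-dimensional subspace $v \subseteq \F_q^m$, any matrix of rank $m-1$ with kernel $v$ is obtained from a fixed such matrix by left multiplication by an element of $\GL_m(\F_q)$ (two surjections $\F_q^m/v \to \F_q^{m-1}$ differ by an automorphism of the target), so the map is injective; surjectivity is clear. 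This identifies $M_m^{(m-1)}$ with $\mathbb{P}^{m-1}(\F_q)$ as sets.

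Next I would carry out the key computation: for $A \in M_m^{(m-1)}$ with $\ker A = v$, compute $\ker(AQ)$. A vector $w$ lies in $\ker(AQ)$ exactly when $Qw \in \ker A = v$, i.e.\ when $w \in Q^{-1}v$. Hence $\ker(AQ) = Q^{-1}(\ker A)$, so under the identification $A \leftrightarrow \ker A$ the map $\sigma_Q$ becomes $v \mapsto Q^{-1}v$, which is exactly $\tau_Q$ by definition. Combining with the previous step, $\sigma_Q(P) = P'$ if and only if $\ker P' = Q^{-1}\ker P$, i.e.\ $\tau_Q(\ker P) = \ker P'$, which is the claimed equivalence. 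I would also remark that this is compatible with the left-unit ambiguity in the definition of $\sigma_Q$ coming from $AQ = Q'A'$ for some $Q' \in \GL_m(\F_q)$, since $Q'$ affects $A'$ only by left multiplication and therefore does not affect $\ker A'$.

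There is no serious obstacle here; the statement is essentially a bookkeeping lemma once the correct kernel identity is written down. The only point requiring a little care is the well-definedness and bijectivity of $A \mapsto \ker A$ on the quotient $M_m^{(m-1)}$ by left multiplication by $\GL_m(\F_q)$, and in particular checking that matrices of rank exactly $m-1$ sharing a kernel are genuinely related by a left unit (this uses that the rank is maximal among the non-invertible matrices, so that the induced map on $\F_q^m/\ker A$ is an isomorphism onto its $(m-1)$-dimensional image, allowing one to correct by an element of $\GL_m(\F_q)$). Once that is in hand, the equivalence of the two actions follows immediately from $\ker(AQ) = Q^{-1}\ker A$.
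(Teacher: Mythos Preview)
Your argument is correct and follows essentially the same route as the paper: establish the bijection $A\mapsto\ker A$ between $M_m^{(m-1)}$ and $\mathbb{P}^{m-1}(\F_q)$, then verify the kernel identity $\ker(AQ)=Q^{-1}\ker A$ (the paper phrases this via $AQ=Q'A'$ and checks $Q^{-1}v\in\ker A'$, which is the same computation). Your treatment of the bijection is in fact more careful than the paper's, which simply asserts it.
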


\begin{proof}

The elements of $M_m^{(m-1)}$ are in bijection with their kernels, each of which contains one element up to scaling. These are exactly the elements of $\mathbb{P}^{m-1}(\F_q)$, for no two elements of $M_m^{(m-1)}$ have the same kernel, and every element of $\mathbb{P}^{m-1}$ is the kernel of a matrix of rank $m-1$. We will show that if $AQ=Q'A'$ with $v \in \ker(A)$, then $Q^{-1}v \in \ker(A')$.


Suppose that $AQ=Q'A'$ and let $v \in \ker(A)$. Now, we must show that $A'(Q^{-1}v)=0$. But, $Av=0$, so we have $0=Av=AQ(Q^{-1}v)=Q'A'(Q^{-1}v).$ Then, since $Q' \in \GL_m(\F_q)$, we also have $Q'^{-1}Q'A'(Q^{-1}v)=A'(Q^{-1}v)=0$. Therefore, $Q^{-1}v \in \ker(A')$. \qedhere 
\end{proof}

Let $\sigma \colon \Or^\times \rightarrow \Sym(\Id(\Or;\p))$ and $\tau \colon \GL_m(\F_q) \rightarrow \Sym(\mathbb{P}^{m-1}(\F_q))$ be the maps that send $\omega$ to $\sigma_\omega$ and $Q$ to $\tau_Q$, respectively. The preceding lemmas lead us to the following main result.

\begin{theorem} Given $\omega \in \Or^\times$ and $Q = \rho(\omega)$, then the following diagram commutes.

$$\xymatrix{
{\Or^\times}\ar^{\sigma\hspace{.35in}}[r]\ar_{\rho}@{^{}->}[d]&{\Sym(\Id(\Or;\p))}\ar^{\sidesim}[d]\\
{\GL_m(\F_q)}\ar^{\tau\hspace{.2in}}[r]&{\Sym(\mathbb{P}^{m-1}(\F_q))}
}$$

\end{theorem}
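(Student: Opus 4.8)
The plan is to assemble this final statement purely by composing the bijections and equivalences already established, since all of the substantive work has been done in Theorems \ref{thm2} and \ref{thm3}. First I would recall the three layers of identification in play: Lemma \ref{lem1} gives a bijection $\Id(\Or;\p) \leftrightarrow M_m^{(m-1)}$ via $P \mapsto \rho(P)$ (choosing a rank-$(m-1)$ generator $A$ of $\rho(P)$); and the discussion preceding Theorem \ref{thm3} gives a bijection $M_m^{(m-1)} \leftrightarrow \mathbb{P}^{m-1}(\F_q)$ via $A \mapsto \ker(A)$, which is well-defined because left multiplication by a unit does not change the kernel, injective because distinct classes in $M_m^{(m-1)}$ have distinct kernels, and surjective because every line in $\F_q^m$ is the kernel of some rank-$(m-1)$ matrix. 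Composing, we get the vertical ``$\sim$'' isomorphism $\Sym(\Id(\Or;\p)) \xrightarrow{\sim} \Sym(\mathbb{P}^{m-1}(\F_q))$ appearing on the right of the diagram, obtained by conjugating permutations through the composite bijection $P \mapsto \rho(P) \mapsto \ker(\rho(P))$.

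Next I would verify commutativity on an arbitrary element. Fix $\omega \in \Or^\times$, set $Q = \rho(\omega) \in \GL_m(\F_q)$, and take $P \in \Id(\Or;\p)$ with chosen generator $A$ of $\rho(P)$ and $v = \ker(A) \in \mathbb{P}^{m-1}(\F_q)$. Going around the diagram one way, $\sigma$ sends $\omega$ to $\sigma_\omega$, and $\sigma_\omega(P) = P' = P\omega + \Or\p$; transporting $P'$ through the right-hand isomorphism yields $\ker(\rho(P'))$. Going the other way, $\rho$ sends $\omega$ to $Q$, then $\tau$ sends $Q$ to $\tau_Q$, and $\tau_Q(v) = Q^{-1}v$. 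So it suffices to check $\ker(\rho(P')) = Q^{-1} v$. By Theorem \ref{thm2}, if $A'$ is a generator of $\rho(P')$ then $\sigma_Q(A) = A'$, i.e. $\M_m(\F_q)A' = \M_m(\F_q)AQ$, so $AQ = Q'A'$ for some $Q' \in \GL_m(\F_q)$. By Theorem \ref{thm3}, this forces $Q^{-1}v \in \ker(A') = \ker(\rho(P'))$, and since both sides are one-dimensional they are equal. Hence the two routes agree, and since $P$ was arbitrary the square commutes.

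Finally I would note the only bookkeeping subtlety: $\sigma$, $\tau$, and $\rho$ should be checked to be genuine maps (in fact $\sigma$ and $\tau$ are antihomomorphisms of monoids, which is harmless here since we only need the diagram to commute on elements, not the naturality of a homomorphism), and one should record that Theorem \ref{thm2} already handled the compatibility of $\sigma$ with $\rho$ at the level of $M_m^{(m-1)}$, so this theorem is exactly Theorem \ref{thm2} post-composed with the bijection $M_m^{(m-1)} \leftrightarrow \mathbb{P}^{m-1}(\F_q)$ of Theorem \ref{thm3}. In other words, the proof is: paste the commuting square of Theorem \ref{thm2} on top of the equivalence of Theorem \ref{thm3}, and observe that the composite of two commuting squares along a shared edge is a commuting square.

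I do not expect a genuine obstacle here; the ``hard part'' is purely organizational, namely making sure the vertical isomorphism in the statement is unambiguously the composite $P \mapsto \ker(\rho(P))$ and that the direction conventions for the $\GL_m(\F_q)$-action ($Q\cdot v = Q^{-1}v$) are consistent with the $AQ$ convention used for $\sigma_Q$, which is precisely what Theorem \ref{thm3} reconciles. Once those conventions are pinned down, the argument is a two-line diagram chase.
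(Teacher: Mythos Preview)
Your proposal is correct and follows exactly the paper's approach: the paper's proof is the single line ``This follows immediately from Theorems \ref{thm2} and \ref{thm3},'' and you have simply unpacked that sentence into an explicit diagram chase. The additional bookkeeping you supply (spelling out the composite bijection $P \mapsto \ker(\rho(P))$ and checking the action conventions line up) is helpful detail but not a different argument.
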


\begin{proof}
This follows immediately from Theorems \ref{thm2} and \ref{thm3}.
\end{proof}

The following example demonstrates the use of the techniques discussed in this section to write an $n \times n$ as a product of prime-determinant matrices in multiple ways.\\

\begin{problab}{Example 1}\label{ex1}
Let $B=\M_n(\Q)$, and let $\Or=\M_n(\Z)$. Then, for $\alpha \in B$, we have $\nrd(\alpha)=\det(\alpha)$. Suppose $\alpha \in \Or$ and that there is no positive integer $k$ that divides every entry of $\alpha$. For a prime $p_1$ dividing $\det(\alpha)$, we can factor $\det(\alpha)=a_1p_1$ for some $a \in \Z$. Then, by Lemma \ref{lem2}, we have $\alpha=A_1P_1$ for matrices $A_1,P_1 \in \M_n(\Z)$, with $\det(A_1)=a_1$ and $\det(P_1)=p_1$. Similarly, we can again write $a_1=a_2p_2$ and $A_1=A_2P_2$, with $\det(A_2)=a_2$ and $\det(P_2)=p_2$. Iterating this process, we obtain $\alpha=P_r \cdots P_1$, with $\det(P_i)=p_i$, with each $p_i$ prime. Since we can choose freely how to arrange the prime factors of $\det(\alpha)$, this allows for several different ways to encode the information given by the matrix $\alpha$, depending on what order we would like to factor $\det(\alpha)$. For example, let $$\alpha=\left(
\begin{array}{cc}
5 & 1\\
0 & 3
\end{array} \right)=\left(
\begin{array}{cc}
1 & 1\\
0 & 3
\end{array}\right) \left(
\begin{array}{cc}
5 & 0\\
0 & 1
\end{array} \right),$$ so $\nrd(\alpha)=\det(\alpha)=15$. Let $p=3, q=5$, $P=\left(
\begin{array}{cc}
1 & 1\\
0 & 3
\end{array}\right),$ and $Q=\left(
\begin{array}{cc}
5 & 0\\
0 & 1
\end{array} \right)$. Since $P$ corresponds to its kernel $v=\left(
\begin{array}{c}
1 \\
2
\end{array} \right)$ modulo 3, we know $\ker \rho(P')=Q^{-1}v=\left(
\begin{array}{c}
1\\
1
\end{array} \right)$ when we write $\alpha=Q'P'$ with $\det(Q')=5$ and $\det(P')=3$. Then, up to left units, $P' \equiv \left(
\begin{array}{cc}
1 & 2\\
0 & 0
\end{array} \right) \pmod 3$. We can choose $P'=\left(
\begin{array}{cc}
1 & 2\\
0 & 3
\end{array} \right)$, and solving $Q'P'=\alpha$, we have $Q'=\left(
\begin{array}{cc}
5 & -3\\
0 & 1
\end{array} \right)$.

\end{problab}

\section{Cycle structure of $\tau_Q$}

We now discuss the cycle structure of the permutation $\tau_Q$ for $Q \in \GL_m(\F_q)$, obtained by the action of $\GL_m(\F_q)$ on $\mathbb{P}^{m-1}$ discussed in the previous section. Note that the permutation $\tau_Q$ has the same as the cycle structure as $\tau_{Q^{-1}}=\tau_Q^{-1}$. It therefore suffices to study the cycle structure of $\GL_m(\F_q)$ acting on $\mathbb{P}^1(\F_q)$ by $Q \cdot v=Qv$ for $Q \in \GL_m(\F_q)$ and $v \in \mathbb{P}^{m-1}(\F_q)$. First, we count the number of fixed points.

\begin{theorem} Let $Q \in \GL_m(\F_q)$ for $n\geq2$ and let $\lambda_1, \dots, \lambda_s \in \overline{\F_q}$ be the eigenvalues of $Q$ with multiplicities $a_1, \dots, a_s$, respectively. Then, the number of fixed points of $\tau_Q$ is $\sum_{i: \lambda_i \in \F_q} \#\mathbb{P}^{a_i-1}(\F_q)=\sum_{i:\lambda_i \in \F_q} \frac{q^{a_i}-1}{q-1}$. 
\end{theorem}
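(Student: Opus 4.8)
The plan is to reduce the count of fixed points of $\tau_Q$ acting on $\mathbb{P}^{m-1}(\F_q)$ to a linear-algebra computation about eigenspaces, using the fact (established in the previous section) that $\tau_Q(v) = Q^{-1}v$, so the cycle structure of $\tau_Q$ agrees with that of the permutation $v \mapsto Qv$. A point $[v] \in \mathbb{P}^{m-1}(\F_q)$ is fixed by this permutation precisely when $Qv = cv$ for some $c \in \F_q^\times$, i.e. $v$ lies in an eigenspace of $Q$ for an eigenvalue $c$ that happens to lie in the ground field $\F_q$. So the set of fixed points is the union, over eigenvalues $\lambda_i \in \F_q$, of the projectivizations $\mathbb{P}(V_{\lambda_i})$ of the corresponding eigenspaces $V_{\lambda_i} = \ker(Q - \lambda_i I) \subseteq \F_q^m$.

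First I would observe that these projectivized eigenspaces are pairwise disjoint: eigenvectors for distinct eigenvalues are linearly independent, so no nonzero vector can lie in two different $V_{\lambda_i}$, hence no point of projective space is counted twice. This makes the total number of fixed points the sum $\sum_{i:\ \lambda_i \in \F_q} \#\mathbb{P}(V_{\lambda_i})$. Next I would pin down $\dim_{\F_q} V_{\lambda_i}$. The eigenvalues of $Q$ with their multiplicities $a_1,\dots,a_s$ are taken in $\overline{\F_q}$ (as stated), so $a_i$ is the algebraic multiplicity of $\lambda_i$; however, the clean formula claimed requires the geometric multiplicity, i.e. one needs $\dim_{\F_q}\ker(Q - \lambda_i I) = a_i$. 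This holds exactly when $Q$ is diagonalizable (equivalently, its minimal polynomial is squarefree), and I would state that hypothesis explicitly — under it, each eigenspace $V_{\lambda_i}$ has $\F_q$-dimension equal to $a_i$, so $\#\mathbb{P}(V_{\lambda_i}) = \#\mathbb{P}^{a_i - 1}(\F_q) = (q^{a_i}-1)/(q-1)$, and summing over the $\F_q$-rational eigenvalues gives the result.

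The last ingredient is the standard count $\#\mathbb{P}^{d-1}(\F_q) = (q^d-1)/(q-1)$: an $\F_q$-vector space of dimension $d$ has $q^d - 1$ nonzero vectors, partitioned into lines of size $q - 1$ each. Assembling: fixed points $= \bigsqcup_{i:\ \lambda_i\in\F_q}\mathbb{P}(V_{\lambda_i})$, with $\dim V_{\lambda_i} = a_i$, gives $\sum_{i:\ \lambda_i \in \F_q}\frac{q^{a_i}-1}{q-1}$.

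The main obstacle — really the only subtlety — is the distinction between algebraic and geometric multiplicity: the formula as displayed is literally correct only when $Q$ is semisimple, so the honest version of the theorem either carries a diagonalizability hypothesis (which I would expect to be in force from the way the section is set up, e.g. when $\gcd(q, \operatorname{ord} Q)=1$) or should read $\dim_{\F_q}\ker(Q-\lambda_i I)$ in place of $a_i$. I would flag this and otherwise the argument is routine. A secondary point worth a sentence is that only eigenvalues lying in $\F_q$ (not in a proper extension) contribute, since a fixed point needs an $\F_q$-rational eigenvector with $\F_q$-rational eigenvalue; eigenvalues in $\F_{q^k}\setminus\F_q$ give rise to longer cycles, not fixed points, which is consistent with the sum being restricted to $\{i : \lambda_i \in \F_q\}$.
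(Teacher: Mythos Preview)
Your approach is essentially the same as the paper's: fixed points of $\tau_Q$ are eigenvectors of $Q$ with $\F_q$-rational eigenvalue, up to scaling, and one sums the sizes of the projectivized eigenspaces. Your concern about algebraic versus geometric multiplicity is legitimate --- the paper's own proof simply asserts that the eigenspace for $\lambda_i$ contains $a_i$ linearly independent vectors without justification, so the gap you flag is present in the original as well (and your added remark on disjointness of the eigenspaces is a detail the paper omits).
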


\begin{proof} The number of eigenvectors of $Q$ up to scaling by $\F_q^\times$ with eigenvalue $\lambda_i$ is $\#\mathbb{P}^{a_i-1}(\F_q)$, the number of elements in the span of $a_i$ linearly independent vectors, up to scaling. Since fixed points correspond to eigenvectors with eigenvalues in $\F_q$ up to scaling, the number of fixed points is the total number of eigenvectors in $\mathbb{P}^{m-1}(\F_q)$ with eigenvalue in $\F_q$, which is the sum of $\# \mathbb{P}^{a_i-1}(\F_q)=\frac{q^{a_i}-1}{q-1}$ for all $i$ with $\lambda_i \in \F_q$. Then, we have that the number of fixed points of $\tau_Q$ is $$\sum_{i:\lambda_i \in \F_q} \frac{q^{a_i}-1}{q-1}.$$ \qedhere 
\end{proof} 
Fripertinger \cite{HF} computed the cycle structure as follows: let $\varphi(x)\in \F_q[x]$. Define $\exp(\varphi)$ to be the smallest positive integer such that $\varphi \mid (x^{\exp(\varphi)}-1)$. Similarly, define $\subexp(\varphi)$ to be the smallest positive integer such that $\varphi \mid (x^{\subexp(\varphi)}-\alpha)$ for some $\alpha \in \F_q^\times$. Note that $\subexp(\varphi)=\frac{\exp(\varphi)}{\gcd(q-1, \exp(\varphi))}$. 

Now, let $\varphi=x^d+b_{d-1}x^{d-1} + \cdots + b_1x+b_0 \in \F_q[x]$ be monic and irreducible, and let $$C(\varphi)\colonequals \left(
\begin{array}{cccccc}
0 & 0 & \cdots & 0 & 0 & -b_0\\
1 & 0 & \cdots & 0 & 0 & -b_1\\
0 & 1  &\cdots & 0 & 0 & -b_2\\
\vdots & \vdots & \ddots & \vdots& \vdots & \vdots\\
0 & 0 & \cdots & 1 & 0 & -b_{d-2}\\
0 & 0 & \cdots & 0 & 1 & -b_{d-1}
\end{array} \right).$$ 
Define $E_{1d}\colonequals(e_{ij})_{i,j}$, where $$e_{ij}=
\begin{cases}
1, & \text{if } i,j=1,d;\\
0,  & \text{otherwise}.
\end{cases}.$$
Then, the \emph{hypercompanion matrix} of $\varphi(x)^k$, for $k \in \Z_{> 0}$ is $$H(\varphi^k)=\left(
\begin{array}{cccccc}
C(\varphi) & 0 & 0&\cdots & 0&0\\
E_{1d} & C(\varphi) & 0& \cdots &0&0\\
0 & E_{1d} & C(\varphi) &\cdots & 0 &0\\
\vdots& \vdots& \vdots& \ddots& \vdots&\vdots\\
0 & 0& 0&\cdots & C(\varphi) & 0\\
0 & 0 & 0& \cdots & E_{1d} &C(\varphi)\\
\end{array}\right).$$

\begin{theorem} \label{thm4}
Letting $f_j=\subexp(\varphi^j)$, with the notation above, the number of $\ell$-cycles in $\sigma_{H(\varphi^k)}$ is $$\sum_{j: f_j=\ell; j \leq k} \frac{q^{jd}-q^{jd-d}}{(q-1)\ell}.$$ 
\end{theorem}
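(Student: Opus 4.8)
The plan is to reduce the count of $\ell$-cycles of $\tau_Q$ for $Q = H(\varphi^k)$ to a count of points of $\mathbb{P}^{m-1}(\F_q)$ lying in orbits of a given size, and then to stratify those points according to which Jordan-type block of $Q$ they "live in." First I would recall that a nonzero vector $v \in \F_q^{m}$ lies in an $\ell$-cycle of $\tau_Q$ exactly when $\ell$ is minimal with $Q^{\ell} v \in \F_q^{\times} v$, i.e. $Q^{\ell} v = \alpha v$ for some $\alpha \in \F_q^{\times}$; equivalently $\ell = \subexp(\mu_v)$ where $\mu_v$ is the (monic) minimal polynomial of $v$ under $Q$, i.e. the monic generator of $\{g \in \F_q[x] : g(Q)v = 0\}$. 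So the number of $\ell$-cycles is $\tfrac{1}{\ell}$ times the number of $v \in \mathbb{P}^{m-1}(\F_q)$ with $\subexp(\mu_v) = \ell$. The hypercompanion matrix $H(\varphi^k)$ acts on $\F_q^m$ (here $m = kd$) as a single Jordan block for the irreducible $\varphi$, so by the structure theory of modules over $\F_q[x]$ the space $\F_q^m \cong \F_q[x]/(\varphi^k)$ as an $\F_q[x]$-module, with $Q$ acting as multiplication by $x$.

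The key computation is then purely module-theoretic: in $\F_q[x]/(\varphi^k)$, the minimal polynomial of an element $v$ is $\varphi^j$ where $j = k - \max\{i : \varphi^i \mid v\}$, so $\mu_v = \varphi^j$ for a unique $j \in \{1,\dots,k\}$ (taking $v \ne 0$), and the set of such $v$ is exactly $\varphi^{k-j}\cdot(\F_q[x]/(\varphi^k)) \setminus \varphi^{k-j+1}\cdot(\F_q[x]/(\varphi^k))$. Counting: $\varphi^{k-j}\cdot(\F_q[x]/(\varphi^k)) \cong \F_q[x]/(\varphi^j)$ has $q^{jd}$ elements and $\varphi^{k-j+1}\cdot(\F_q[x]/(\varphi^k))$ has $q^{(j-1)d} = q^{jd-d}$ elements, so there are $q^{jd} - q^{jd-d}$ vectors with minimal polynomial exactly $\varphi^j$; dividing by $q-1$ to pass to $\mathbb{P}^{m-1}(\F_q)$ gives $\tfrac{q^{jd}-q^{jd-d}}{q-1}$ points. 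Since a point $v$ with $\mu_v = \varphi^j$ lies in an $\ell$-cycle with $\ell = \subexp(\varphi^j) = f_j$, summing over all $j \le k$ with $f_j = \ell$ and dividing by $\ell$ yields exactly the claimed formula $\sum_{j: f_j = \ell,\ j \le k} \tfrac{q^{jd}-q^{jd-d}}{(q-1)\ell}$.

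The main obstacle, and the step needing the most care, is the identification $\ell = \subexp(\mu_v)$ together with verifying that $\mu_v$ is always a power of $\varphi$ — this is where I must check that $\subexp(\varphi^j)$ (not $\exp$) is the relevant invariant, because the cycle closes as soon as $Q^{\ell}v$ is a \emph{scalar multiple} of $v$, not necessarily equal to $v$. Concretely: $Q^{\ell} v \in \F_q^{\times} v$ iff $(x^{\ell} - \alpha)v = 0$ in $\F_q[x]/(\varphi^j)$ for some $\alpha \in \F_q^{\times}$, iff $\varphi^j \mid x^{\ell} - \alpha$ for some $\alpha$, and the least such $\ell$ is precisely $\subexp(\varphi^j) = f_j$ by definition; one should note $x^{\ell}$ is automatically a unit mod $\varphi^j$ since $\varphi \nmid x$ (as $\varphi$ is irreducible of degree $d \ge 1$ and $\varphi \ne x$ unless $d=1,\ \varphi = x$, a case one handles by noting $Q$ would then be nilpotent, not invertible, so it does not arise). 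A secondary point is bookkeeping: different values of $j$ can give the same $f_j$, which is why the final sum ranges over all such $j$, and one must confirm the orbits for distinct $j$ are genuinely disjoint subsets of $\mathbb{P}^{m-1}(\F_q)$ — immediate, since $\mu_v$ is an invariant of $v$. With these points settled the rest is the routine counting above.
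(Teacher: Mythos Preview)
Your argument is correct. The paper itself does not prove this theorem at all: its entire proof is the single line ``See \cite[Theorem 4]{HF} and the preceding discussion,'' i.e.\ a bare citation to Fripertinger. So there is no approach in the paper to compare against beyond that reference.

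What you have done is unpack that citation into a short self-contained argument, and your route---identify $\F_q^{kd}$ with the cyclic $\F_q[x]$-module $\F_q[x]/(\varphi^k)$, observe that the projective orbit length of $v$ equals $\subexp(\mu_v)$, stratify nonzero vectors by their annihilator $\varphi^j$, and count each stratum as $q^{jd}-q^{(j-1)d}$---is essentially the same module-theoretic computation that underlies Fripertinger's result. Two small points worth tightening if you write this up formally: (i) the existence of $\subexp(\mu_v)$ (equivalently, that the projective orbit of every point is finite) relies on $x$ being a unit in $\F_q[x]/(\varphi^j)$, which you note via $\varphi\neq x$; you might phrase this more directly as ``$Q\in\GL_m(\F_q)$ forces $\varphi(0)\neq 0$''; and (ii) the passage from vectors to projective points by dividing by $q-1$ uses that the stratum $\{v:\mu_v=\varphi^j\}$ is $\F_q^{\times}$-stable, which is immediate but could be said explicitly. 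Neither is a gap.
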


\begin{proof}
See \cite[Theorem 4]{HF} and the preceding discussion.
\end{proof}

\begin{lemma}
Every matrix $Q \in \GL_m(\F_q)$ is conjugate to a block diagonal matrix whose diagonal blocks are hypercompanion matrices of $\varphi^k$ for some monic irreducible $\varphi$ dividing the minimal polynomial of $Q$.
\end{lemma}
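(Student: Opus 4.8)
The plan is to invoke the theory of rational canonical forms over the field $\F_q$ and repackage it in terms of hypercompanion matrices. First I would recall that every $Q\in\GL_m(\F_q)$, viewed as an $\F_q[x]$-module on $\F_q^m$ (with $x$ acting as $Q$), decomposes by the structure theorem for finitely generated modules over the PID $\F_q[x]$ into a direct sum of cyclic modules $\F_q[x]/(\varphi^k)$ with $\varphi$ monic irreducible; here no factor of $x$ appears in any $\varphi$ precisely because $Q$ is invertible, so each $\varphi$ divides the minimal polynomial of $Q$ and has nonzero constant term. Choosing a basis adapted to this decomposition exhibits $Q$ as conjugate to a block diagonal matrix, one block per primary cyclic summand.

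Next I would identify the block attached to a summand $\F_q[x]/(\varphi^k)$, where $\varphi=x^d+b_{d-1}x^{d-1}+\cdots+b_0$, with the hypercompanion matrix $H(\varphi^k)$ defined just above. Concretely, one picks a cyclic generator $v$ and uses the ordered basis grouped into $k$ blocks of length $d$, where within each block one takes $v,\,Qv,\dots,Q^{d-1}v$ for an appropriate generator, and the off-diagonal coupling between consecutive blocks is governed by the Jordan-type relation that multiplication by $\varphi(x)$ shifts one block to the next. The claim that the matrix of $Q$ in this basis is exactly $H(\varphi^k)$ — with companion blocks $C(\varphi)$ on the diagonal and the single-entry matrices $E_{1d}$ on the subdiagonal — is a direct computation: the $C(\varphi)$ blocks record $x\cdot Q^{i}v = Q^{i+1}v$ within a block together with the wraparound given by $\varphi(Q)$, and the $E_{1d}$ entries record that $\varphi(Q)$ applied to the last generator of one block yields the first generator of the next. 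I would phrase this as: the hypercompanion matrix $H(\varphi^k)$ is precisely the matrix, in a suitable basis, of multiplication by $x$ on $\F_q[x]/(\varphi^k)$, which is a standard fact (the ``generalized Jordan form'' over a non-algebraically-closed field); see e.g. the references to Fripertinger \cite{HF}.

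Assembling these block identifications across all primary summands gives the desired block diagonal conjugate of $Q$, completing the proof. The main obstacle is the bookkeeping in the middle step: verifying that the specific basis ordering produces exactly the stated hypercompanion shape $H(\varphi^k)$, rather than some other matrix similar to it, requires choosing the cyclic basis carefully (the ``right'' generator in each of the $k$ sub-blocks so that the coupling matrices come out as $E_{1d}$ and not a scalar multiple or a differently-placed single entry). I would either carry out this verification by explicitly writing $x$ acting on the ordered basis $\{\varphi(x)^{k-1},\,x\varphi(x)^{k-1},\dots,\,x^{d-1}\varphi(x)^{k-1},\,\varphi(x)^{k-2},\dots,1\}$ of $\F_q[x]/(\varphi^k)$ and reading off the matrix, or — since this is a well-documented normal form — simply cite it, as the paper already leans on \cite{HF} for the adjacent cycle-structure computation. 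Everything else (existence of the primary cyclic decomposition, invertibility forcing $x\nmid\varphi$, $\varphi\mid\minpoly(Q)$) is immediate from the structure theorem.
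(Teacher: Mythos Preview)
Your proposal is correct and follows essentially the same approach as the paper: view $\F_q^m$ as an $\F_q[x]$-module via $f\cdot v=f(Q)v$, take the elementary divisor decomposition into primary cyclic summands $\F_q[x]/(\varphi^k)$, and read off the hypercompanion blocks in an adapted basis. The paper's proof is a two-sentence sketch of exactly this, while you spell out the basis bookkeeping and the invertibility observation $x\nmid\varphi$ in more detail than the paper does.
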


\begin{proof} View $\F_q^m$ as an $\F_q[x]$-module, by $f \cdot v=f(Q)v$, and the corresponding elementary divisor decomposition. This gives a basis $\beta$ such that the matrix for the transformation given by $Q$, with respect to the basis $\beta$ is a block diagonal matrix whose blocks are $H(\varphi^k)$ for each elementary divisor $\varphi^k$. 
\end{proof}

A product formula is given for the cycle structure of block diagonal matrices. For a more complete description of the cycle structure of $\tau_Q$, see Fripertinger \cite[Theorem 4]{HF} and the preceding discussion about the cycle structure given by the action of $\GL_m(\F_q)$ on $\mathbb{P}^{m-1}(\F_q)$, where the details of the action are worked out. In the simple case where the characteristic and minimal polynomial of $Q$ are equal, and a power of an irreducible polynomial, then we have the following result.

\begin{theorem}
If $\charpoly(Q)=\minpoly(Q)=\varphi^k$, where $\varphi$ is irreducible, then $Q \sim H(\varphi^k)$, and the number of $\ell$-cycles of $\tau_Q$ is given by the coefficient of $x^\ell$ in the formula $$\sum_{j\leq k} \frac{q^{jd}-q^{(j-1)d}}{(q-1) f_j}x^{f_j},$$  where $f_j=\subexp(\varphi^j)=\subexp(\varphi)p^t$ where $t=\min\{r \in \Z_{\geq 0}: p^r \geq j\}$. 
\end{theorem}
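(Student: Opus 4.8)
The plan is to reduce the statement to the previously-cited results of Fripertinger and a careful analysis of the arithmetic function $\subexp(\varphi^j)$. First I would observe that the conjugacy claim $Q \sim H(\varphi^k)$ is immediate: when $\charpoly(Q) = \minpoly(Q) = \varphi^k$ with $\varphi$ irreducible of degree $d$, the $\F_q[x]$-module $\F_q^m$ (with $x$ acting as $Q$) has a single elementary divisor $\varphi^k$, so by the preceding lemma the rational/hypercompanion canonical form consists of the single block $H(\varphi^k)$. Since $\tau_Q$ depends only on the conjugacy class of $Q$ (conjugation by $g$ carries the permutation $\tau_Q$ to $\tau_{gQg^{-1}}$ via $v \mapsto gv$), it suffices to compute the cycle structure of $\tau_{H(\varphi^k)}$.

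Next I would invoke Theorem \ref{thm4}: the number of $\ell$-cycles of $\tau_{H(\varphi^k)}$ is $\sum_{j:\, f_j = \ell,\ j \le k} \frac{q^{jd} - q^{jd-d}}{(q-1)\ell}$, where $f_j = \subexp(\varphi^j)$. Repackaging this as a generating polynomial, the number of $\ell$-cycles is precisely the coefficient of $x^\ell$ in $\sum_{j \le k} \frac{q^{jd} - q^{(j-1)d}}{(q-1) f_j} x^{f_j}$, since the $j$-th term contributes to the coefficient of $x^{f_j}$ exactly the summand $\frac{q^{jd}-q^{(j-1)d}}{(q-1)f_j}$, and terms with equal $f_j$ collect correctly. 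So the content of the theorem beyond Theorem \ref{thm4} is the explicit formula $f_j = \subexp(\varphi) \, p^t$ with $t = \min\{r \in \Z_{\ge 0} : p^r \ge j\}$, where $p = \opchar(\F_q)$.

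For that last identity I would argue as follows. Recall $\subexp(\varphi^j)$ is the least $N > 0$ with $\varphi^j \mid x^N - \alpha$ for some $\alpha \in \F_q^\times$, and $\exp(\varphi^j)$ the least $N$ with $\varphi^j \mid x^N - 1$; the relation $\subexp = \exp / \gcd(q-1, \exp)$ is given. So it suffices to pin down $\exp(\varphi^j)$. Let $e = \exp(\varphi)$. A standard lifting-the-exponent argument in characteristic $p$ shows $\exp(\varphi^j) = e \, p^{\lceil \log_p j \rceil}$: if $\varphi$ has a root $\zeta$ in $\overline{\F_q}$ with multiplicative order $e$ (note $p \nmid e$ since $\varphi \mid x^e - 1$ is separable), then $x^N - 1 = \prod (x - \eta)$ over $N$-th roots of unity with multiplicity, and $\varphi^j \mid x^N - 1$ iff $\zeta$ is a root of $x^N-1$ of multiplicity $\ge j$; writing $N = p^s N'$ with $p \nmid N'$, the multiplicity of any root of $x^N - 1$ is $p^s$, so we need $e \mid N'$ and $p^s \ge j$, giving the minimal $N = e \cdot p^{\lceil \log_p j\rceil}$. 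Then $\subexp(\varphi^j) = \exp(\varphi^j)/\gcd(q-1, \exp(\varphi^j))$; since $p \nmid q-1$, the power of $p$ survives the gcd untouched, so $\subexp(\varphi^j) = \big(e/\gcd(q-1,e)\big) p^{\lceil \log_p j\rceil} = \subexp(\varphi)\, p^t$ with $t = \lceil \log_p j \rceil = \min\{r : p^r \ge j\}$, as claimed.

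The main obstacle, and the one place I would be most careful, is the multiplicity bookkeeping in characteristic $p$: one must correctly separate the tame part $e$ (prime to $p$) from the wild part, verify that $\varphi$ being irreducible of degree $d$ with $\varphi \mid x^e - 1$ forces $p \nmid e$ and that all roots of $\varphi$ share the same order $e$, and then confirm that $\gcd(q-1, \cdot)$ does not interact with the $p$-power. I would also need to double-check the edge case $j = 1$, where $t = 0$ and $f_1 = \subexp(\varphi)$ must match the number-of-fixed-points count, and to confirm the indexing convention so that the coefficient extraction in the displayed sum genuinely reproduces the count from Theorem \ref{thm4} with no off-by-one in the range $j \le k$.
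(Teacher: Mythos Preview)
Your proposal is correct and follows the same route as the paper: the paper's proof is the single line ``Follows immediately from Theorem~\ref{thm4},'' and your argument likewise reduces to that theorem after the (immediate) observation that a single elementary divisor forces $Q \sim H(\varphi^k)$. You go further than the paper by actually proving the identity $\subexp(\varphi^j)=\subexp(\varphi)\,p^{\lceil \log_p j\rceil}$ via the factorization $x^{p^sN'}-1=(x^{N'}-1)^{p^s}$ and the fact that $p\nmid q-1$; the paper simply asserts this formula in the theorem statement without justification, so your write-up is strictly more complete.
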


\begin{proof}

Follows immediately from Theorem \ref{thm4}.
\end{proof}

From this, we recover the result given by Forsyth--Gurev--Shrima in \cite{FGS} for the case when $n=2$. 

\begin{theorem}\label{thm1} Let $Q \in \GL_2(\F_q)$ act on $\mathbb{P}^1(\F_q)$ and let $\ell$ be the smallest positive integer such that $Q^\ell \in \F_q^\times$. Then, all points that are not fixed by $\tau_Q$ lie in an $\ell$-cycle.
\end{theorem}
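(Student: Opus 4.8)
The plan is to derive this as a special case of the preceding structural results, so the work is mostly in extracting the arithmetic consequence rather than proving anything new about the action.

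First I would reduce to the case where $Q$ has irreducible characteristic polynomial. By the elementary divisor lemma, $Q$ is conjugate to a block diagonal matrix with blocks $H(\varphi^{k})$, and conjugation does not change the cycle structure of $\tau_Q$. In dimension $m=2$ the possibilities for the rational canonical form are: (i) $\charpoly(Q)$ is a product of two (possibly equal) linear factors $(x-\lambda)(x-\mu)$; or (ii) $\charpoly(Q)=\varphi$ with $\varphi$ monic irreducible quadratic. In case (i) with $\lambda\neq\mu$, $Q$ is diagonalizable with two distinct rational eigenvalues, so both points of $\Pj^1(\F_q)$ spanned by eigenvectors are fixed and there are no other points, so the statement is vacuous (and indeed $\ell=1$ only if $\lambda=\mu$, otherwise $\ell$ is whatever order makes $Q^\ell$ scalar, but then $\lambda^\ell=\mu^\ell$ forces... — in fact if $\lambda\neq\mu$ the two fixed points account for all of $\Pj^1(\F_q)$ precisely when $q=\ldots$; more carefully, $Q$ fixes exactly the two coordinate points and permutes the remaining $q-1$ points, on which it acts by $v\mapsto (\lambda/\mu) v$ in an affine coordinate, so every non-fixed point lies in a cycle of length equal to the multiplicative order of $\lambda/\mu$, which is exactly the smallest $\ell$ with $(\lambda/\mu)^\ell=1$, i.e. with $Q^\ell$ scalar). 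In case (i) with $\lambda=\mu$ but $Q$ not scalar, $Q\sim H(\varphi^2)$ with $\varphi=x-\lambda$; there is one fixed point and the other $q$ points break into cycles whose length is $\subexp(\varphi^2)=\subexp(x-\lambda)p^{t}$ with $t=\min\{r:p^r\geq 2\}=1$, so all cycles have the same length $\ell$, which one checks equals the smallest integer with $Q^\ell$ scalar. In case (ii), $Q\sim C(\varphi)=H(\varphi^1)$, there are no fixed points, and by Theorem~\ref{thm4} the number of $\ell'$-cycles is nonzero only for $\ell'=f_1=\subexp(\varphi)$, so again all non-fixed points lie in $\ell'$-cycles, and $\subexp(\varphi)$ is by definition the smallest $\ell'$ with $\varphi\mid x^{\ell'}-\alpha$ for some $\alpha\in\F_q^\times$, equivalently the smallest $\ell'$ with $Q^{\ell'}$ scalar.

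So the key steps are: (1) reduce to rational canonical form via the lemma; (2) enumerate the three canonical forms in dimension two; (3) in each case, either observe the statement is vacuous or apply Theorem~\ref{thm4}/the preceding cycle-structure theorem to see that all non-fixed-point cycles have a common length; (4) identify that common length with $\subexp$ of the relevant companion polynomial and then check that $\subexp$ of the characteristic polynomial is exactly the smallest $\ell$ with $Q^\ell\in\F_q^\times$ — for this last identification I would use the Cayley--Hamilton relation together with the definition of $\subexp$, noting $Q^\ell$ is scalar iff the minimal polynomial divides $x^\ell-\alpha$ for some $\alpha$, and that in each canonical form above the relevant exponent governing the cycles coincides with this.

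The main obstacle is step (4): matching the combinatorially-defined quantity $f_j=\subexp(\varphi^j)$ coming out of Fripertinger's formula with the clean arithmetic characterization "$\ell$ is the smallest integer with $Q^\ell$ scalar," especially in the non-semisimple case $\varphi^2$ where the controlling exponent is $\subexp(\varphi^2)$ rather than $\subexp(\varphi)$, and I must be sure that this is still the correct order of $Q$ modulo scalars (the jump $t=1$ accounts for the unipotent part). I would handle this by writing $Q = S U$ with $S$ semisimple and $U$ unipotent commuting, observing $Q^\ell$ is scalar iff $S^\ell$ is the same scalar times $U^{-\ell}$, which forces $U^{\ell}=1$ and $S^\ell$ scalar simultaneously, and then computing each order separately; the semisimple order is $\subexp(\varphi_{\mathrm{red}})$ and the unipotent order is $p^{t}$, and their lcm is $\subexp(\varphi^k)$, matching $f_k$. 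This makes the identification transparent and completes the proof.
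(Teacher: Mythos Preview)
Your approach is essentially the paper's own: case-split on the rational canonical form of $Q$ in dimension two, invoke the Fripertinger cycle-count (Theorem~\ref{thm4}) for the hypercompanion blocks $H(\varphi^k)$, and handle the split semisimple case directly via the affine action $v\mapsto(\lambda/\mu)v$. Your step~(4), matching $\subexp(\varphi^k)$ to the order of $Q$ modulo scalars via the $SU$ decomposition, fills in a detail the paper leaves implicit; just clean up the false start in case~(i) (the two eigenlines do not exhaust $\mathbb{P}^1(\F_q)$, as you then correctly observe).
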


\begin{proof} If the minimal polynomial of $Q \in \GL_2(\F_q)$ is $m_Q(x)=\varphi(x)^k$, where $\varphi$ is irreducible over $\F_q$ ($k=1$ or 2), then $Q \sim H(\varphi^k)$ and all points are either fixed or lie in a cycle of size $\subexp(\varphi^k)$. Otherwise, $m_Q(x)=(x-a)(x-b)$ and $Q$ is conjugate to a diagonal matrix whose diagonal entries are $a$ and $b$. In this case, we only obtain new eigenvectors by raising $Q$ to the $\ell$th power, where $\ell$ is the smallest positive integer such that $a^\ell=b^\ell$; then $Q^\ell$ is scalar and all points that were not previously fixed lie in an $\ell$-cycle. \qedhere 

\end{proof}

If $m \not= 2$, but $Q$ is diagonalizable over the algebraic closure $\overline{\F_q}$, we have the following conditions under which all cycles of $\tau_Q$ that are not fixed points are the same size.

\begin{theorem} Let $Q \in \GL_m(\F_q)$ be a diagonalizable matrix over $\overline{\F_q}$ with eigenvalues $\lambda_1, \dots, \lambda_s$. Then the permutation $\tau_Q$ contains only fixed points and $\ell$-cycles for some $\ell \in \Z_{>0}$ if and only if $\lambda_i/\lambda_j$ has multiplicative order $\ell$ in $\F_q$ for all $\lambda_i \not= \lambda_j$. In this case, $\ell$ is the smallest positive integer such that $Q^\ell=\alpha$ for some $\alpha \in \F_q$; i.e. $Q^{\ell}$ is a scalar matrix. 

\end{theorem}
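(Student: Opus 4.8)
The plan is to reduce to the diagonal case over $\overline{\F_q}$ and analyze when the action of $Q$ on $\mathbb{P}^{m-1}(\F_q)$ has a uniform non-fixed cycle length. First I would observe that a point $v \in \mathbb{P}^{m-1}(\F_q)$ lies in an $\ell$-cycle under $\tau_Q$ (acting by $v \mapsto Qv$, which has the same cycle structure as the original action by Section 5) precisely when $\ell$ is the smallest positive integer with $Q^\ell v = \mu v$ for some $\mu \in \F_q^\times$; that is, $v$ is an eigenvector of $Q^\ell$ but not of $Q^j$ for any $0 < j < \ell$. So $\tau_Q$ consists only of fixed points and $\ell$-cycles exactly when every $v \in \mathbb{P}^{m-1}(\F_q)$ that is not an eigenvector of $Q$ becomes an eigenvector of $Q^\ell$, and no non-fixed $v$ is an eigenvector of any smaller power.

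Next I would diagonalize: since $Q$ is diagonalizable over $\overline{\F_q}$, choose a basis of eigenvectors with eigenvalues $\lambda_1,\dots,\lambda_s$ (grouped by eigenspace), and note $Q^\ell$ is a scalar matrix $\alpha I$ if and only if $\lambda_i^\ell = \lambda_j^\ell$ for all $i,j$, equivalently $(\lambda_i/\lambda_j)^\ell = 1$ for all pairs. If every ratio $\lambda_i/\lambda_j$ (for $\lambda_i \neq \lambda_j$) has multiplicative order exactly $\ell$ in $\F_q$ — in particular all such ratios lie in $\F_q$ — then $Q^\ell = \alpha I$ for the least common multiple of these orders, which is $\ell$ itself; hence $Q^\ell$ fixes every point of $\mathbb{P}^{m-1}(\F_q)$, so every cycle has length dividing $\ell$. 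To see there are no cycles of intermediate length $j$ with $1 < j < \ell$, I would show that a non-fixed $v$ in an $\ell'$-cycle with $\ell' \mid \ell$ forces $Q^{\ell'}v = \mu v$; writing $v$ in eigencoordinates, the coordinates supported on distinct eigenspaces must satisfy $\lambda_i^{\ell'} = \lambda_j^{\ell'}$, i.e. $(\lambda_i/\lambda_j)^{\ell'}=1$, which by the order hypothesis forces $\ell \mid \ell'$, hence $\ell' = \ell$. The existence of at least one such $v$ (so that $\ell$-cycles actually occur unless $m$ is too small) comes from taking $v$ with nonzero coordinates in two different eigenspaces whose eigenvalue ratio has order $\ell$.

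For the converse, suppose $\tau_Q$ has only fixed points and $\ell$-cycles. If some ratio $\lambda_i/\lambda_j$ of distinct eigenvalues either fails to lie in $\F_q$ or has multiplicative order $\ell' \neq \ell$, I would produce a non-fixed point whose cycle length is not $\ell$. Concretely, pick eigenvectors $u_i, u_j \in \overline{\F_q}^m$ for $\lambda_i, \lambda_j$; the subtlety is that individual eigenvectors need not be defined over $\F_q$, so I would instead argue on the $\F_q$-rational level using that conjugate eigenvalue pairs come in Galois orbits. The cleanest route is: the restriction of $Q$ to the $\F_q$-rational span of a Galois-stable pair of eigenlines is a $2 \times 2$-type block, and by Theorem \ref{thm1} (the $m=2$ case) the non-fixed cycle length on that block is the smallest $t$ with $Q^t$ scalar on that block, namely the order of $\lambda_i/\lambda_j$ if that ratio is in $\F_q$, and strictly larger otherwise — in either case different from $\ell$ unless that order equals $\ell$, contradicting our assumption. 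This gives a non-fixed point of cycle length $\neq \ell$, the desired contradiction. Finally, the last sentence of the statement ("$\ell$ is the smallest positive integer such that $Q^\ell = \alpha$") follows because, as computed above, $Q^\ell = \alpha I$ exactly when $\ell$ is a common multiple of all the ratio-orders, and the hypothesis forces all those orders equal to $\ell$.

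The main obstacle I expect is the descent/rationality bookkeeping in the converse: eigenvectors for eigenvalues outside $\F_q$ are not $\F_q$-points of $\mathbb{P}^{m-1}$, so one cannot simply "take an eigenvector with two nonzero coordinates." Handling this carefully — either by invoking the $m=2$ result on Galois-stable $2$-dimensional invariant subspaces, or by a direct argument with the action of $Q$ on lines spanned by $\F_q$-linear combinations of conjugate eigenvectors — is the delicate part; the forward direction is essentially linear algebra over $\F_q$ once the reformulation "$v$ is in an $\ell$-cycle iff $\ell$ is least with $Q^\ell v$ a scalar multiple of $v$" is in place.
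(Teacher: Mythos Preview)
Your plan is correct and, for the direction ``all ratios have order $\ell$ $\Rightarrow$ only fixed points and $\ell$-cycles,'' it coincides with the paper's argument almost verbatim: write a non-fixed $v$ in eigencoordinates over $\overline{\F_q}$, observe $Q^\ell$ is scalar so every cycle length divides $\ell$, and rule out a shorter cycle by noting it would force $\lambda_i^{\ell_0}=\lambda_j^{\ell_0}$ for two distinct eigenvalues supporting $v$.

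Where you diverge is in the other direction. The paper does \emph{not} pass to Galois-stable $2$-planes or invoke the $m=2$ theorem. It first observes that $\ell$ is the least exponent with $Q^\ell$ scalar (since $\tau_{Q^\ell}=\id$ iff $Q^\ell$ is central), giving $(\lambda_i/\lambda_j)^\ell=1$; then, to exclude a smaller order $\ell_0$, it simply takes eigenvectors $v_i\in E_{\lambda_i}$, $v_j\in E_{\lambda_j}$ and notes that $\lambda_i^{\ell_0}=\lambda_j^{\ell_0}$ would make $v_i+v_j$ an eigenvector of $Q^{\ell_0}$, hence a point in a cycle of length at most $\ell_0<\ell$. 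The paper does not address the $\F_q$-rationality of $v_i+v_j$ that you worry about, and in fact states the conclusion as ``order $\ell$ in $\overline{\F_q}$'' in the proof. So your concern is legitimate, and your proposed route through $2$-dimensional $\F_q$-rational invariant subspaces is a genuinely more careful (and more involved) argument than what the paper gives; the paper's version is shorter but tacitly treats the eigenvectors as if they were defined over $\F_q$.
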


\begin{proof}
First, note that fixed points of a matrix $Q$ correspond to eigenvectors of $Q$. Let $E_{\lambda_i}$ be the eigenspace of eigenvalue $\lambda_i$ in $\mathbb{P}^{m-1}(\F_q)$. Each of the $E_{\lambda_i}$ subspaces are disjoint and their span is $\mathbb{P}^{m-1}(\F_q)$. 

Suppose that $\tau_Q$ has only fixed points and $\ell$-cycles. Note that $\ell$ is the smallest positive integer such that $Q^\ell=\alpha$ for some $\alpha \in \F_q^\times$, since $\sigma_{Q^\ell}=\id$ if and only if $Q^\ell$ is a scalar matrix. Then, given $i \not= j$, $i,j \leq m$, we will show that $\lambda_i^{\ell_0}\not=\lambda_j^{\ell_0}$. Let $v_i \in E_i$ and $v_j \in E_j$. Clearly, $\lambda_i^\ell=\lambda_j^\ell=\alpha$ and this does not hold for any $\ell_0<\ell$, otherwise $v_i+v_j$ would be a fixed point of $Q^{\ell_0}$ and hence would lie in a cycle of size $\ell_0 <\ell$. Therefore, $(\lambda_i/\lambda_j)^\ell$ has multiplicative order $\ell$ in $\overline{\F_q}$. 

Conversely, suppose that $\lambda_i/\lambda_j$ has multiplicative order $\ell$ for all $i \not= j$, so $\lambda_i^\ell=\alpha$ for all $i$, and for some $\alpha \in \overline{\F_q}$. All cycles have size dividing $\ell$ for $Q^\ell=\alpha$ is scalar, and $\sigma_{Q^\ell}=\id$. Now, suppose that there is an element $v \in \mathbb{P}^{m-1}(\F_q)$ that lies in a cycle of size $\ell_0 < \ell$.  Then, write $v=\sum_{i=1}^m c_iv_i \in \mathbb{P}^{m-1}(\F_q)$, where $v_i \in E_{\lambda_i}$. If $c_i=0$ for all but one $i$, call it $i_0$ then $v$ is fixed by $Q$, since it is in $E_{\lambda_{i_0}}$. So, suppose $c_i, c_j \not= 0$ for some $i \not= j$. Then, suppose $Q^{\ell_0}v=\lambda ' v$ for some $\lambda '$, and $\ell_0<\ell$. Then, $\lambda '=\lambda_i^{\ell_0}=\lambda_j^{\ell_0}$, so $\lambda_i/\lambda_j$ has multiplicative order dividing $\ell_0<\ell$, a contradiction. \qedhere 

\end{proof} 


We now have a way of determining the permutation induced by an element in $\omega \in \Or$ by switching the order of factorization of its product with an element of prime norm, if $\Or$ is a PIR. A further question that arises from this problem is: what are the minimal conditions we must require of our order $\Or$ in order to obtain the same connection between the order an a matrix ring obtained by reduction modulo its Jacobson radical. This would involve the theory of Eichler or Hereditary orders.

\end{document}